\newcommand{\bigast}{\mathop{\vspace{-0.3em}\lower0.3ex\hbox{\scalebox{2}{$\ast$}}}}
\theoremstyle{plain}
\newtheorem{thm}{Theorem}
\newtheorem{prop}{Proposition}
\newtheorem{cor}{Corollary}
\newtheorem{lem}{Lemma}
\theoremstyle{definition}
\newtheorem{rem}{Remark}
\newtheorem{ass}{Assumption}
\title{On the value-distribution of the difference between logarithms of two symmetric power $L$-functions}
\author{Kohji Matsumoto\footnote{
The first author was supported by JSPS Grant-in-Aid for Scientific Research (B) Grant Number 25287002
}
and 
Yumiko Umegaki\footnote{
The second author was supported by JSPS Grant-in-Aid for Young Scientists (B) Grant Number 23740020.}
}
\date{}
\begin{document}
\maketitle
%
%
\begin{abstract}
We consider the value distribution of the difference 
between logarithms of two symmetric power $L$-functions 
at $s=\sigma > 1/2$.
We prove that certain averages of those values can be written as integrals 
involving a density function which is constructed explicitly.
\end{abstract}
%
%
\section{Introduction and the statement of main results.}

Let $f$ be a primitive form of weight $k$ and level $N$, 
which means that it is a normalized common Hecke eigen new form 
of weight $k$ for $\Gamma_0(N)$. 
We denote by $S_k(N)$ the set of all cusp forms of weight $k$ and level $N$. 
Any $f\in S_k(N)$ has a Fourier expansion at infinity of the form
\[
f(z)=\sum_{n=1}^{\infty}\lambda_f(n)n^{(k-1)/2}e^{2\pi inz},
\quad 
\lambda_f(1)=1.
\]
In the case $f$ is a normalized common Hecke eigen form, 
the Fourier coefficients $\lambda_f(n)$ are real numbers. 
We consider the $L$-function 
\[
L(f,s)
=
\sum_{n=1}^{\infty}\frac{\lambda_f(n)}{n^s} 
\]
associated with a primitive form $f$ where $s=\sigma+i\tau\in\mathbb{C}$.
This is absolutely convergent when $\sigma >1$, but can be continued to the whole of
$\mathbb{C}$ as an entire function.
 
We denote by $\mathbb{P}$ the set of all prime numbers.
We know that $L(f,s)$ has the Euler product
\begin{align*}
L(f,s)
=&
\prod_{\substack{p \in \mathbb{P} \\ p \mid N}}(1-\lambda_f(p) p^{-s})^{-1}
\prod_{\substack{p \in \mathbb{P} \\ p \nmid N}}(1-\lambda_f(p) p^{-s} + p^{-2s})^{-1}
\\
=&
\prod_{\substack{p \in \mathbb{P} \\ p \mid N}}(1-\lambda_f(p) p^{-s})^{-1}
\prod_{\substack{p \in \mathbb{P} \\ p\nmid N}}(1-\alpha_f(p) p^{-s})^{-1}(1-\beta_f(p) p^{-s})^{-1},
\end{align*}
where $\beta_f(p)$ is the complex conjugate of $\alpha_f(p)$.
Note that $\alpha_f(p)$ and $\beta_f(p)$ satisfy 
$\alpha_f(p)+\beta_f(p)=\lambda_f(p)$ and
$|\alpha_f(p)|=|\beta_f(p)|=1$.
This Euler product is deduced from the relations
\begin{equation}\label{euler}
\lambda_f(p^{\ell})=
\begin{cases}
\displaystyle \lambda_f^{\ell}(p) & p \mid N,\\
\displaystyle \sum_{h=0}^{\ell}\alpha_f^{\ell-h}(p)\beta_f^h(p) & p \nmid N.
\end{cases}
\end{equation}

In the present paper, we consider the value of
$\log L(\mathrm{Sym}_f^{\mu}, s)-\log L(\mathrm{Sym}_f^{\nu}, s)$ 
at $s=\sigma >1/2$,
where the $\gamma$-th symmetric power $L$-function is defined by 
\begin{align*}
L(\mathrm{Sym}_f^{\gamma}, s)
=&\sum_{n=1}^{\infty}\frac{\lambda_f(n^{\gamma})}{n^s}
\\
=
&
\prod_{\substack{p \in \mathbb{P}\\ p \mid N}}
(1-\lambda_f(p^{\gamma}) p^{-s})^{-1}
\prod_{\substack{p \in \mathbb{P} \\ p \nmid N}} \prod_{h=0}^{\gamma}
(1-\alpha_f^{\gamma-h}(p)\beta_f^h(p) p^{-s})^{-1}
\end{align*}
for $\sigma >1$. 
In the case $\gamma=1$, clearly
$L(\mathrm{Sym}_f^1, s)=L(f, s)$. 
In general, it is believed that the symmetric power $L$-function 
could be continued to an entire function and would satisfy a functional equation.
We suppose the analytic continuation and its holomorphy for $\sigma>1/2$ 
in Assumption~\ref{ass} below.

For $\sigma > 1$, we define
\[
\log L(\mathrm{Sym}_f^{\gamma}, s)
=
-
\sum_{\substack{p \in \mathbb{P}\\ p \mid N}}
\mathrm{Log} (1-\lambda_f(p^{\gamma}) p^{-s})
-
\sum_{\substack{p \in \mathbb{P} \\ p \nmid N}} 
\sum_{h=0}^{\gamma}
\mathrm{Log}(1-\alpha_f^{\gamma-h}(p)\beta_f^h(p) p^{-s}),
\]
where $\mathrm{Log}$ means the principal branch.
In the strip $1/2 < \sigma \leq 1$,
we suppose it can be analytically continued to $\sigma > 1/2$
under Assumption~\ref{grh} below, 
which claims that $L(\mathrm{Sym}_f^{\gamma}, s)$ 
has no zero in the strip $1/2 < \sigma \leq 1$.
In this paper we introduce the following two assumptions.
%
%
\begin{ass}\label{ass}
Let $f$ be a primitive form of even weight $k$ where $2\leq k < 12$ or $k=14$.
The level of $f$ is $q^m$, 
where $q$ is a prime number.
For a fixed positive integer $\gamma$, 
the symmetric power $L$-function $L(\mathrm{Sym}_f^{\gamma}, s)$ is
analytically continued to a holomorphic function in $\sigma >1/2$.
Moreover it satisfies the estimate
\begin{equation}\label{ass-estimate}
|L(\mathrm{Sym}_f^{\gamma}, s)|
\ll_{\gamma}
q^m(|\tau|+2)
\end{equation}
for $1/2 < \sigma \leq 2$.
\end{ass}

\begin{rem}
For the symmetric power $L$-function, 
if we obrain a suitable
functional equation which is the same type as in Cogdell and Michel~\cite{cm},
we have the same estimate as \eqref{ass-estimate} 
by using the Phlagm{\'e}n-Lindel{\"o}f principle.
As Cogdell and Michel mentioned in \cite{cm}, 
this assumption is held in the case when $f$ is 
a primitive form of weight $2$ and of square-free level
for the symmetric cube $L$-function,
which is proved by Kim and Shahidi \cite{ks-ann}. 
\end{rem}

\begin{rem}
For a primitive form of weight $k$ and level $M$, where $k$ is an
even positive integer and $M$ is a positive integer, the automophic $L$-function
$L(f,s)$ is entire and it has a functional equation.
The estimate of the form \eqref{ass-estimate} holds for $L(f,s)$,
that is
\begin{equation}\label{ass-estimate'}
|L(\mathrm{Sym}_f^1, s)|=|L(f, s)|
\ll
M (|\tau|+2)
\end{equation}
for $1/2 < \sigma \leq 2$.
\end{rem}

\begin{ass}\label{grh}
Let $f$ be a primitive form of weight $k$ which $2\leq k < 12$ or $k=14$.
The level is $q^m$, 
where $q$ is a prime number.
For a fixed positive integer $\gamma$, 
the $L$-functions $L(\mathrm{Sym}_f^{\gamma}, s)$
satisfies Generalized Riemann Hypothesis (GRH)
which means that $L(\mathrm{Sym}_f^{\gamma}, s)$ has no zero
in the strip $1/2 < \sigma \leq 1$.
\end{ass}

%
%
In this paper, we mainly consider two types of averages which are defined below.
For the definitions of them, we first prepare the notations.
Let $q$ be a prime number.
For any series $\{A_f\}$ over primitive forms 
$f \in S_k(q^m)$,
where $2\leq k < 12$ or $k=14$,
we use the symbol $\sum^{\prime}$ in the following sense:
\[
\sum_{f\in S_k(q^m)}^{\qquad\prime} A_f
=
\frac{1}{C_k(1-C_q(m))}
\sum_{\substack{f \in S_k(q^m)\\f:\mathrm{primitive}\;\mathrm{form}}}
\frac{A_f}{\langle f, f \rangle},
\]
where $C_k$ and $C_q(m)$ are constants defined by 
\[
C_k=\frac{(4\pi)^{k-1}}{\Gamma(k-1)},\quad
C_q(m)=\begin{cases}
       0 & m=1,\\
       q(q^2-1)^{-1} & m=2,\\
       q^{-1} & m\geq 3.
       \end{cases}
\]
These constants appeared in 
Lemma~3 in the second author~\cite{2ndAuthor} 
(see \eqref{P} below), 
which came from Petersson's formula.

We define the ``partial'' Euler product of the symmetric power  
$L$-function by 
\[
L_{\mathbb{P}(q)}(\mathrm{Sym}_f^{\gamma}, s)
=
\prod_{p \in \mathbb{P}(q)}\prod_{h=0}^{\gamma}
(1-\alpha_f^{\gamma -h}(p)\beta_f^h(p)p^{-s})^{-1}
\] 
for a primitive form $f$ of level $q^m$, 
where $q$ is a prime number
and the subset $\mathbb{P}(q) \subset \mathbb{P}$ means 
the set of all prime numbers except for 
the fixed prime number $q$.
Let $\mu >\nu \geq 1$ be integers with 
$\mu-\nu=2$.
By $Q(\mu)$ we denote the smallest prime number
satisfying $2^{\mu}/\sqrt{Q(\mu)}<1$.
In this paper, we study two types of averages which are defined by
\begin{align}\label{def_avg_prime}
&
\mathrm{Avg}_{\mathrm{prime}}
\Psi (
\log L_{\mathbb{P}(q)}(\mathrm{Sym}_f^{\mu},\sigma)-\log L_{\mathbb{P}(q)} (\mathrm{Sym}_f^{\nu}, \sigma)
)
\nonumber\\
=&
\lim_{\substack{q \to\infty\\ q :\mathrm{prime}\\ m:\mathrm{fixed}}}
\sum_{f\in S_k(q^m)}^{\qquad\prime}
\Psi (
\log L_{\mathbb{P}(q)}(\mathrm{Sym}_f^{\mu},\sigma)-\log L_{\mathbb{P}(q)}(\mathrm{Sym}_f^{\nu}, \sigma)
)
\end{align}
and
\begin{align}\label{def_avg_power}
&
\mathrm{Avg}_{\mathrm{power}}
\Psi (
\log L_{\mathbb{P}(q)}(\mathrm{Sym}_f^{\mu},\sigma)-\log L_{\mathbb{P}(q)}(\mathrm{Sym}_f^{\nu}, \sigma)
)
\nonumber\\
=&
\displaystyle
\lim_{\substack{m\to\infty\\ q :\mathrm{fixed} \; \mathrm{prime}\\ 
(\;\mathrm{If}\;1\geq \sigma > 1/2,\; q\geq Q(\mu))}}
\sum_{f\in S_k(q^m)}^{\qquad\prime}
\Psi (
\log L_{\mathbb{P}(q)}(\mathrm{Sym}_f^{\mu},\sigma)-\log L_{\mathbb{P}(q)} (\mathrm{Sym}_f^{\nu}, \sigma)), 
\end{align}
where $\Psi$ is a $\mathbb{C}$-valued function defined on $\mathbb{R}$.
On the above average $\mathrm{Avg}_{\mathrm{power}}$, 
we consider $q\geq Q(\mu)$ when $1\geq \sigma > 1/2$.
The reason is technical which will be mentioned in Section $5$.
The main theorem in the present paper is as follows. 
%
%
\begin{thm}\label{main}
Let $\mu > \nu  \geq 1$ be integers with $\mu-\nu=2$.
Suppose Assumptions~\ref{ass} and \ref{grh} 
when $\gamma$ is $\mu$ and $\nu$.
Let $k$ be an even integer which satisfies $2\leq k <12$ or $k=14$.
Then, for $\sigma>1/2$, there exists a function 
${\mathcal{M}}_{\sigma}:\mathbb{R}\to\mathbb{R}_{\geq 0}$ 
which can be explicitly constructed, and for which the formula
\begin{align*}
&
\mathrm{Avg}_{\mathrm{prime}} \Psi (
\log L_{\mathbb{P}(q)}(\mathrm{Sym}_f^{\mu},\sigma)-\log L_{\mathbb{P}(q)} (\mathrm{Sym}_f^{\nu}, \sigma)
)
\\
=&
\mathrm{Avg}_{\mathrm{power}} \Psi (
\log L_{\mathbb{P}(q)}(\mathrm{Sym}_f^{\mu},\sigma)-\log L_{\mathbb{P}(q)} (\mathrm{Sym}_f^{\nu}, \sigma)
)
\\
=&
\int_{\mathbb{R}}
\mathcal{M}_{\sigma}(u)
\Psi (u) \frac{du}{\sqrt{2\pi}}
\end{align*}
holds for any $\Psi:\mathbb{R}\to\mathbb{C}$ which is a
bounded continuous function or a compactly supported characteristic function.
\end{thm}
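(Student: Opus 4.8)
The plan is to reduce the whole statement to a computation about a random Euler product and then to apply a ``density function'' machinery of the type developed for value-distributions of $\log L$ (in the spirit of Ihara--Matsumoto and the second author's earlier work). The key structural observation is that the logarithm of the partial Euler product factors as a sum over $p\in\mathbb{P}(q)$ of local terms depending only on the pair $(\alpha_f(p),\beta_f(p))$, equivalently on the angle $\theta_f(p)\in[0,\pi]$ with $\lambda_f(p)=2\cos\theta_f(p)$. Writing $g_{\mu,\nu}(\theta,\sigma,p)$ for the local contribution
\[
g_{\mu,\nu}(\theta,\sigma,p)
=-\sum_{h=0}^{\mu}\mathrm{Log}\bigl(1-e^{i(\mu-2h)\theta}p^{-\sigma}\bigr)
+\sum_{h=0}^{\nu}\mathrm{Log}\bigl(1-e^{i(\nu-2h)\theta}p^{-\sigma}\bigr),
\]
which because $\mu-\nu=2$ is genuinely ``small'' (only the two extreme terms $h=0$ and $h=\mu$ survive the cancellation up to lower-order), one has
\[
\log L_{\mathbb{P}(q)}(\mathrm{Sym}_f^{\mu},\sigma)-\log L_{\mathbb{P}(q)}(\mathrm{Sym}_f^{\nu},\sigma)
=\sum_{p\in\mathbb{P}(q)} g_{\mu,\nu}(\theta_f(p),\sigma,p).
\]
For $\sigma>1$ this sum converges absolutely since $|g_{\mu,\nu}(\theta,\sigma,p)|\ll_\mu p^{-\sigma}$; for $1/2<\sigma\le 1$ one must instead invoke Assumptions~\ref{ass} and \ref{grh} to make sense of the left-hand side and to control the tail, which is where the restriction $q\ge Q(\mu)$ in $\mathrm{Avg}_{\mathrm{power}}$ enters.

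First I would establish the following \emph{equidistribution} input: as $q\to\infty$ with $m$ fixed, or as $m\to\infty$ with $q$ fixed, the finite families $\{\theta_f(p)\}_{f\in S_k(q^m)}$ (weighted by $\langle f,f\rangle^{-1}$ and normalized as in the $\sum'$) become, for any fixed finite set of primes $p_1,\dots,p_r\in\mathbb{P}(q)$, jointly equidistributed with respect to the product of Sato--Tate measures $\mu_{ST}=\tfrac{2}{\pi}\sin^2\theta\,d\theta$ on $[0,\pi]^r$. This is exactly the content of the Petersson-formula estimate recorded as Lemma~3 of \cite{2ndAuthor} (the display labelled \eqref{P} in the paper): the main term produces $\prod_i \mu_{ST}$ via the Chebyshev/Hecke relations \eqref{euler}, and the error term is $o(1)$ in both limits, uniformly for the relevant ranges (here the constants $C_k$, $C_q(m)$ are precisely the normalizing factors that make the total mass $1$). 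Granting this, for a \emph{fixed finite truncation} $\prod_{p\le X,\,p\ne q}$ of the Euler product the average of $\Psi$ of the truncated sum converges, in both limiting regimes, to the same quantity
\[
\int_{[0,\pi]^{\pi(X)-[q\le X]}}
\Psi\!\Bigl(\sum_{p\le X,\,p\ne q} g_{\mu,\nu}(\theta_p,\sigma,p)\Bigr)
\prod_{p}\mu_{ST}(d\theta_p),
\]
which is the $X$-truncation of a value of a \emph{random} Euler product $\sum_{p} g_{\mu,\nu}(\Theta_p,\sigma,p)$ with $\{\Theta_p\}$ independent and $\mu_{ST}$-distributed.

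Second I would pass to the limit $X\to\infty$ on both sides. On the arithmetic side this requires a tail estimate: for $\sigma>1$ it is trivial; for $1/2<\sigma\le1$ one needs that $\sum_{p>X}g_{\mu,\nu}(\theta_f(p),\sigma,p)$ is small on average over the family, which follows from Assumption~\ref{ass} (the polynomial bound \eqref{ass-estimate} feeds a zero-density / mean-value argument bounding $\log L_{\mathbb{P}(q)}$) together with Assumption~\ref{grh} (GRH gives $\log L(\mathrm{Sym}_f^\gamma,\sigma)\ll (\log\log)$-type bounds, hence the tail is controlled), plus the condition $q\ge Q(\mu)$ which guarantees $2^\mu p^{-\sigma}<1$ for all $p\ge q$ so that the local logarithms are well-defined along the whole tail. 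On the probabilistic side one shows the random series $\sum_p g_{\mu,\nu}(\Theta_p,\sigma,p)$ converges almost surely and in $L^1$: the summands are independent with $|g_{\mu,\nu}(\Theta_p,\sigma,p)|\ll_\mu p^{-\sigma}$ and, crucially, mean close to $0$ (the leading behaviour of $g_{\mu,\nu}$ is $\mathrm{Re}$ of a linear combination of $e^{i(\text{integer})\Theta_p}$, whose $\mu_{ST}$-expectations vanish), so Kolmogorov's three-series theorem applies for all $\sigma>1/2$. Hence both averages equal $\mathbb{E}\bigl[\Psi\bigl(\sum_p g_{\mu,\nu}(\Theta_p,\sigma,p)\bigr)\bigr]$, first for bounded continuous $\Psi$ by dominated convergence, then for compactly supported indicator functions by approximating from inside and outside by continuous functions and checking that the limiting distribution has no atoms.

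Third, to turn this expectation into the claimed integral $\int_{\mathbb{R}}\mathcal{M}_\sigma(u)\Psi(u)\,du/\sqrt{2\pi}$ one must produce an \emph{explicit density} $\mathcal{M}_\sigma$ for the real random variable $Y_\sigma:=\sum_p g_{\mu,\nu}(\Theta_p,\sigma,p)$. Here I would compute the characteristic function $\mathbb{E}[e^{i\xi Y_\sigma}]=\prod_p \mathbb{E}[e^{i\xi g_{\mu,\nu}(\Theta_p,\sigma,p)}]=\prod_p\Bigl(\tfrac{2}{\pi}\int_0^\pi e^{i\xi g_{\mu,\nu}(\theta,\sigma,p)}\sin^2\theta\,d\theta\Bigr)$, show via the $O(p^{-\sigma})$ bound and mean-zero property that this infinite product converges to an entire (in fact rapidly decaying) function of $\xi$ on suitable vertical strips --- this is where the decisive point is, because integrability of $\widehat{Y_\sigma}$ (hence existence and smoothness of the density by Fourier inversion, $\mathcal{M}_\sigma(u)=\tfrac{1}{\sqrt{2\pi}}\int_{\mathbb{R}}\mathbb{E}[e^{i\xi Y_\sigma}]e^{-i\xi u}\,d\xi$ after rescaling conventions) requires that the local characteristic functions decay fast enough, which comes from the fact that $g_{\mu,\nu}(\cdot,\sigma,p)$ is nonconstant on a set of positive measure so each factor is $<1$ in modulus away from $\xi=0$, combined with a quantitative version for small primes. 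One then defines $\mathcal{M}_\sigma$ by this Fourier integral, checks $\mathcal{M}_\sigma\ge 0$ and $\int\mathcal{M}_\sigma=\sqrt{2\pi}$-normalization, and verifies it is ``explicitly constructed'' in the sense that each local factor is an elementary integral over $[0,\pi]$. I expect the \textbf{main obstacle} to be precisely the uniformity in the arithmetic-to-probabilistic transition for $1/2<\sigma\le1$: making the tail $\sum_{p>X}$ negligible \emph{uniformly} as $q\to\infty$ (or $m\to\infty$) requires combining \eqref{ass-estimate}, GRH, and a Petersson-type large-sieve bound carefully, and it is here that the technical hypothesis $q\ge Q(\mu)$ is genuinely used; the rest ($\mu-\nu=2$ giving a short local term, Sato--Tate equidistribution from Petersson, Kolmogorov three-series, Fourier inversion) is comparatively routine.
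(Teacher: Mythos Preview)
Your strategy is correct and is, at the structural level, the paper's own argument recast in probabilistic language. The paper first proves the theorem for the characters $\Psi=\psi_x=e^{ixu}$ (its Lemma~\ref{keylemma}): it builds $\mathcal{M}_\sigma$ as a convolution of explicit local densities $\mathcal{M}_{\sigma,p}$ with respect to \emph{Haar} measure on $\mathcal{T}$ in the variable $t=\alpha_f(p)^{\mu}$, shows via Petersson and polynomial approximation that the averaged $\psi_x$ matches $\widetilde{\mathcal{M}}_\sigma(x)$, and only then passes to general $\Psi$ by Fourier inversion. You instead invoke joint Sato--Tate equidistribution of $\theta_f(p)$, Kolmogorov's three-series theorem for the limiting random series, and Fourier inversion of its characteristic function. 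These yield the same density: because $\mu\ge 3$, one has $\int_0^\pi\cos(k\theta)\,\tfrac{2}{\pi}\sin^2\theta\,d\theta=0$ for every integer $|k|\ge 3$, so the pushforward of Sato--Tate by $\theta\mapsto\cos(\mu\theta)$ coincides with the pushforward of Haar by $\phi\mapsto\cos\phi$, and the two local factors agree. This coincidence is not automatic and is exactly the mechanism behind the paper's Remark~\ref{reason} explaining why the undifferenced $\log L(\mathrm{Sym}_f^\gamma,\sigma)$ fails.

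Two points where your sketch diverges from what the paper actually does. First, for $1/2<\sigma\le 1$ the paper does \emph{not} fix a truncation $X$ and then interchange the limits $q^m\to\infty$ and $X\to\infty$; it lets the truncation grow with the level, $\mathcal{P}_{\log q^m}$, and estimates the three pieces $\mathcal{X}_{\log q^m}$, $\mathcal{Y}_{\log q^m}$, $\mathcal{Z}_{\log q^m}$ of \eqref{basic} directly. The tail $\mathcal{X}$ is controlled not by any zero-density input but by a Mellin-transform approximation under GRH (Lemma~\ref{appSymL}, following Duke), together with a Cauchy--Schwarz/Petersson bound on the exponentially smoothed sum $\mathcal{S}_\gamma$; the ``moving'' piece $\mathcal{Y}$ is handled by explicit power-series bounds on the coefficients $G_a(p,x)$. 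Your two-limit scheme would require a uniformity you have not supplied, and this is precisely the work of Sections~5--6. Second, the condition $q\ge Q(\mu)$ is not about the tail primes $p>X$: for $p\ne q$ one has $|\alpha_f(p)|=|\beta_f(p)|=1$ and the local logarithms are always well-defined. The constraint arises when relating $L_{\mathbb{P}(q)}$ to the full $L$ in Section~5, where the single ramified factor $\log(1-\lambda_f(q^{\gamma})q^{-s})$ must make sense; since $|\lambda_f(q^{\gamma})|\le 2^{\gamma}$, one needs $2^{\mu}q^{-\sigma}<1$.
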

The above restriction on the weight $k$ is necessary to prove \eqref{P} below.

%
%
We mention a corollary of the $\mathrm{Avg}_{\mathrm{prime}}$ part of the theorem.
Consider the following different type of averages, involving summations with respect 
to levels:
\begin{align*}
&
\mathrm{Avg}_{\mathrm{primesum}}
\Psi (
\log L_{\mathbb{P}(q)}(\mathrm{Sym}_f^{\mu},\sigma)-\log L_{\mathbb{P}(q)} (\mathrm{Sym}_f^{\nu}, \sigma)
)
\\
=&
\lim_{X \to \infty}
\frac{1}{\pi(X)} \sum_{\substack{q \leq X\\ q :\mathrm{prime}\\ m : \mathrm{fixed}}} 
\sum_{f \in S_k(q^m)}^{\qquad\prime}
\Psi (
\log L_{\mathbb{P}(q)}(\mathrm{Sym}_f^{\mu},\sigma)-\log L_{\mathbb{P}(q)} (\mathrm{Sym}_f^{\nu}, \sigma)),
\end{align*}
where $\pi(X)$ denotes the number of prime numbers not larger than $X$, and
\begin{align*}
&
\mathrm{Avg}_{\mathrm{primepowersum}}
\Psi (
\log L_{\mathbb{P}(q)}(\mathrm{Sym}_f^{\mu},\sigma)-\log L_{\mathbb{P}(q)} (\mathrm{Sym}_f^{\nu}, \sigma)
)
\\
=&
\lim_{X \to \infty}
\frac{1}{\pi^*(X)} \sum_{\substack{q^m \leq X\\ q :\mathrm{prime}\\ m\geq 1}} 
\sum_{f \in S_k(q^m)}^{\qquad\prime}
\Psi (
\log L_{\mathbb{P}(q)}(\mathrm{Sym}_f^{\mu},\sigma)-\log L_{\mathbb{P}(q)} (\mathrm{Sym}_f^{\nu}, \sigma)),
\end{align*}
where $\pi^*(X)$ denotes the number of all pairs $(q,m)$ of a prime number $q$ and
a positive integer $m$ with $q^m\leq X$.

\begin{cor}\label{main-cor}
Under the same assumptions as Theorem~\ref{main}, we have
\begin{align*}
&
\mathrm{Avg}_{\mathrm{primesum}} \Psi (
\log L_{\mathbb{P}(q)}(\mathrm{Sym}_f^{\mu},\sigma)-\log L_{\mathbb{P}(q)} (\mathrm{Sym}_f^{\nu}, \sigma)
)
\\
=&
\mathrm{Avg}_{\mathrm{primepowersum}} \Psi (
\log L_{\mathbb{P}(q)}(\mathrm{Sym}_f^{\mu},\sigma)-\log L_{\mathbb{P}(q)} (\mathrm{Sym}_f^{\nu}, \sigma)
)
\\
=&
\int_{\mathbb{R}}
\mathcal{M}_{\sigma}(u)
\Psi (u) \frac{du}{\sqrt{2\pi}}.
\end{align*}
\end{cor}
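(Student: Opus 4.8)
The plan is to deduce Corollary~\ref{main-cor} from the $\mathrm{Avg}_{\mathrm{prime}}$ part of Theorem~\ref{main} by a straightforward averaging argument. First I would record what Theorem~\ref{main} actually gives: writing
\[
F_f(\sigma) = \log L_{\mathbb{P}(q)}(\mathrm{Sym}_f^{\mu},\sigma)-\log L_{\mathbb{P}(q)}(\mathrm{Sym}_f^{\nu},\sigma)
\]
and
\[
S(q) = \sum_{f\in S_k(q^m)}^{\qquad\prime} \Psi(F_f(\sigma)),
\qquad
I = \int_{\mathbb{R}} \mathcal{M}_{\sigma}(u)\,\Psi(u)\,\frac{du}{\sqrt{2\pi}},
\]
the $\mathrm{Avg}_{\mathrm{prime}}$ statement says precisely that $S(q)\to I$ as $q\to\infty$ through primes (with $m$ fixed). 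The $\mathrm{Avg}_{\mathrm{primesum}}$ quantity is $\frac{1}{\pi(X)}\sum_{q\le X,\,q\text{ prime}} S(q)$, i.e.\ the Ces\`aro-type average of the sequence $S(q)$ along the primes; since $\pi(X)\to\infty$, the standard fact that the average of a convergent sequence converges to the same limit gives $\mathrm{Avg}_{\mathrm{primesum}}\Psi(\cdots) = I$. This handles the first equality in the corollary.

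For the $\mathrm{Avg}_{\mathrm{primepowersum}}$ part, the summation runs over all pairs $(q,m)$ with $q^m\le X$, weighted by $1/\pi^*(X)$. The key observation is that pairs with $m\ge 2$ are negligible in number: the number of such pairs with $q^m\le X$ is $\ll \sqrt{X}$, whereas $\pi^*(X) = \pi(X) + O(\sqrt{X}) \sim X/\log X$. So the $m\ge 2$ contribution, being a bounded number of terms $S(q)$ (each bounded since $\Psi$ is bounded, or more carefully since $S(q)$ is bounded uniformly — which follows because $\Psi$ is bounded and $\sum^{\prime}1$ is $O(1)$ by \eqref{P}) divided by $\pi^*(X)$, tends to $0$. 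Thus $\mathrm{Avg}_{\mathrm{primepowersum}}$ has the same limit as $\frac{1}{\pi^*(X)}\sum_{q\le X,\,q\text{ prime}} S_1(q)$ where $S_1(q)$ is the $m=1$ sum, and since $\pi^*(X)/\pi(X)\to 1$, this again converges to $I$ by the same Ces\`aro argument.

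The only genuinely necessary ingredient beyond Theorem~\ref{main} is a uniform bound $|S(q)| \le B$ for all primes $q$ (with the relevant fixed $m$): this is what legitimizes both the ``average of a convergent sequence'' step and the ``negligible tail'' step. Such a bound is immediate: $|\Psi|\le \|\Psi\|_\infty$ (for bounded continuous $\Psi$, or $|\Psi|\le 1$ for a characteristic function), and the normalized sum $\sum^{\prime} 1 = C_k(1-C_q(m))^{-1}\sum_f \langle f,f\rangle^{-1}$ is bounded by an absolute constant uniformly in $q$ by Petersson's formula~\eqref{P} (applied with the trivial character, $n=1$). Hence $|S(q)|\le C\|\Psi\|_\infty$ for all $q$.

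I do not expect a serious obstacle here; the corollary is essentially a formal consequence of Theorem~\ref{main} together with the elementary facts that $\pi(X)\to\infty$, $\pi^*(X)\sim\pi(X)$, and that a Ces\`aro mean of a convergent bounded sequence converges to its limit. The one point requiring a line of care is the uniform boundedness of $S(q)$, which I would cite from~\eqref{P}; another is to note that $\Psi$ being merely a compactly supported characteristic function (not continuous) causes no trouble since only boundedness of $\Psi$ and the already-established convergence $S(q)\to I$ are used. I would therefore organize the proof as: (i) set up notation and recall $S(q)\to I$; (ii) prove the uniform bound $|S(q)|\le C\|\Psi\|_\infty$ from~\eqref{P}; (iii) conclude $\mathrm{Avg}_{\mathrm{primesum}}=I$ by the Ces\`aro lemma; (iv) estimate the $m\ge 2$ pairs as $O(\sqrt X)$ and conclude $\mathrm{Avg}_{\mathrm{primepowersum}}=I$.
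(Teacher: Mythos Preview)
Your proposal is correct and follows essentially the same route as the paper's proof: both use the $\mathrm{Avg}_{\mathrm{prime}}$ part of Theorem~\ref{main} together with the uniform bound $\sum' 1 \ll 1$ from \eqref{P1} to handle the Ces\`aro average for $\mathrm{Avg}_{\mathrm{primesum}}$, and then show the $m\ge 2$ contribution to $\mathrm{Avg}_{\mathrm{primepowersum}}$ is negligible (of size $\ll X^{-1/2}\log X$) before reducing to the $m=1$ case. The only cosmetic difference is that the paper writes out the $\varepsilon$-$Q_0$ argument and the explicit splitting \eqref{pr-cor}, whereas you phrase it in terms of Ces\`aro means; just be sure your uniform bound on $S(q)$ is stated as uniform in both $q$ and $m$ (which \eqref{P1} indeed gives), since this is needed for the primepowersum part.
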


\begin{rem}\label{rem-main}
Theorem~\ref{main} can be generalized to the case of any average defined by some
limit, which is different from those in \eqref{def_avg_prime} and
\eqref{def_avg_power}, but satisfies the condition that $q^m\to\infty$.
(For example, $q\to\infty$ with $m=m(q)$ moving arbitrarily.)    In fact, from the
proof we can see that the only necessary limit procedure is $q^m\to\infty$.
\end{rem}

The first result of this type is due to Bohr and Jessen \cite{bj}.
Let $\zeta(s)$ be the Riemann zeta-function.    Bohr and Jessen proved that,
when $\Psi$ is a compactly supported characteristic function defined on
$\mathbb{C}$, the formula
\begin{align}\label{bj-formula}
\lim_{T\to\infty}\frac{1}{2T}\int_{-T}^T \Psi(\log\zeta(s+i\tau))d\tau=
\int_{\mathbb{C}}\mathcal{M}_{\zeta,\sigma}(w)\Psi(w)\frac{dudv}{\sqrt{2\pi}}
\end{align}
holds for $\Re s>1/2$ (where $w=u+iv$), with a certain density function
$\mathcal{M}_{\zeta,\sigma}$.
The analogue for the logarithmic derivative $\zeta^{\prime}/\zeta(s)$
was first proved by Kershner and Wintner \cite{kewi}.

Ihara \cite{ihara} discovered that the same type of results can be shown for
certain mean values of $L^{\prime}/L(s,\chi)$ with respect to characters, where
$L(s,\chi)$ denotes the Dirichlet (or Hecke) $L$-function attached to the character
$\chi$, including also the function field case.
Ihara's work was strengthened, and extended to the $\log L$ case, in several joint papers
of Ihara and the first author 
\cite{i-m0}, \cite{i-1stAuthor}, \cite{i-m2}, \cite{i-m3}.
Recently, Mourtada and Murty \cite{mm} obtained an analogous result for the mean value
of $L^{\prime}/L(s,\chi)$ with respect to discriminants.

In those former results, the function $\Psi$ is defined on $\mathbb{C}$, and the
right-hand side of the formula is an integral over $\mathbb{C}$.   However in
our Theorem~\ref{main}, the function $\Psi$ is defined on $\mathbb{R}$, and the
right-hand side is an integral over $\mathbb{R}$.   This is one remarkable
difference of our presenr work from the former researches. 

The plan of this paper is as follows. 
Section 2 is the preparation, with the proof of Corollary~\ref{main-cor}.
In Section 3 we construct the density function ${\mathcal{M}}_{\sigma}$,
in Section 4 we state the key lemma (Lemma~\ref{keylemma}) 
and prove it in the case $\sigma > 1$,
in Section 5 we prepare certain approximation of 
$L_{\mathbb{P}(q)}(\mathrm{Sym}_f^{\gamma},s)$ 
to prove the key lemma, 
in Section 6 we prove the key lemma for $1\geq \sigma > 1/2$ and finally, 
in Section 7 we will complete the proof of Theorem~\ref{main}.
The basic structure of our argument 
is similar to the previous work by Ihara and the first author~\cite{i-1stAuthor}.

\begin{rem}
It is surely interesting to search for the density function, of the nature similar
to the above, for the average
of $\log L_{\mathbb{P}(q)}(\mathrm{Sym}_f^{\gamma}, s)$ itself
for $f\in S_k(q^m)$.
However to obtain such a density funcion is difficult by our present method.
The reason is explained in Remark~\ref{reason} below.
Therefore we consider the difference between the logarithm of
symmetric $\mu$-th power $L$ function
and that of symmetric $\nu$-th power $L$ function 
where $\mu$ and $\nu$ are of the same parity.
However we have another difficulty in the case $\mu-\nu>2$.
It is explained in Remark~\ref{nu} below.
Hence Theorem~\ref{main} is shown only in the case $\mu-\nu=2$.
\end{rem}

{\bf Acknowledgment.}
The authors would like to express their gratitude to Professor Masaaki Furusawa 
and Professor Atsuki Umegaki for their valuable comments.

%
%
\section{Preparations.}

Bohr and Jessen \cite{bj} used the Kronecker-Weyl theorem on uniform distribution of
sequences as an essential tool in the proof of \eqref{bj-formula}.   
In Ihara \cite{ihara}, the corresponding tool is the orthogonality relation of
characters.

In our present situation, the corresponding useful tool is Petersson's well-known
formula (see, e.g., \cite{iw}).
In the proof of our main Theorem~\ref{main}, we will use the following
formula (\eqref{P} below) for a prime number $q$, 
which was shown in Lemma~3 in the second author~\cite{2ndAuthor}.
This formula embodies the essence of Petersson's formula, in the form suitable for
our present aim.

When $2\leq k < 12$ or $k=14$,
for the primitive form $f$ of weight $k$ and level $q^m$, 
we have
\begin{equation}\label{P}
\sum_{f\in S_k(q^m)}^{\qquad\prime}
\lambda_f(n)
=
\delta_{1, n}
+
\begin{cases} 
O_k(n^{(k-1)/2} q^{-k+1/2})
& m=1, \\
O_k(n^{(k-1)/2} q^{m(-k+1/2)}q^{k-3/2})
& m\geq 2, 
\end{cases}
\end{equation}
where $\delta_{1, n}=1$ if $n=1$ and $0$ otherwise.
We denote the error term in \eqref{P} by
$n^{(k-1)/2} E(q^m)$, that is
\[
\sum_{f\in S_k(q^m)}^{\qquad\prime}
\lambda_f(n)
-
\delta_{1, n}
=
n^{(k-1)/2}E(q^m).
\]
Then we have
\begin{equation}\label{E1}
E(q^m)\ll q^{-k+1/2}
\end{equation}
for any $m$, 
and 
\begin{align}\label{E2}
E(q^m)\ll& \begin{cases}
          q^{-3/2} & m=1,\\
          q^{-5/2} & m=2,\\
          q^{-1-m}   & m\geq 3,
          \end{cases}
\nonumber \\
\ll& q^{-m}
\end{align}
for any $m$.
Also in the case $n=1$, the formula \eqref{P} implies
\begin{equation}\label{P1}
\sum_{f\in S_k(q^m)}^{\qquad\prime}
\lambda_f(1)
=
\sum_{f\in S_k(q^m)}^{\qquad\prime}
1
=
1+E(q^m) \ll 1.
\end{equation}

Let $\mathcal{P}$ be a subset of $\mathbb{P}$ and $q$ a fixed prime number.
For a primitive form $f$ of weight $k$ and level $q^m$, define
\[
L_{\mathcal{P}}(\mathrm{Sym}_f^{\gamma}, s)
=\prod_{p \in \mathcal{P}} L_p(\mathrm{Sym}_f^{\gamma},s),
\]
where
\[
L_p(\mathrm{Sym}_f^{\gamma},s)=
\begin{cases}
(1-\lambda_f(p^{\gamma})p^{-s})^{-1} & p=q,\\
\displaystyle 
\prod_{h=0}^{\gamma}(1-\alpha_f^{\gamma-h}(p)\beta_f^h(p)p^{-s})^{-1} & p\neq q.
\end{cases}
\]
Especially we write 
\[
L_{\mathcal{P}}(f,s)=L_{\mathcal{P}}(\mathrm{Sym}_f^1,s).
\]
Further, for integers $\mu>\nu>0$ with $\mu-\nu=2$,
we put
\[
L_{\mathcal{P}}(\mathrm{Sym}_f^{\mu}, \mathrm{Sym}_f^{\nu}, s)
=
\frac{L_{\mathcal{P}}(\mathrm{Sym}_f^{\mu},  s)}
{L_{\mathcal{P}}(\mathrm{Sym}_f^{\nu},  s)}.
\]
This can be defined for $\sigma>1/2$ under Assumption~\ref{grh}.

Now let $\mathcal{P}$ be a finite subset of $\mathbb{P}(q)$.
We define the topological group $\mathcal{T}_{\mathcal{P}}$ by
\[
\mathcal{T}_{\mathcal{P}}=\prod_{p \in \mathcal{P}} \mathcal{T},
\]
where $\mathcal{T}=\{t \in \mathbb{C} \mid |t|=1\}$.
For a fixed $\sigma>1/2$, we consider the function 
\[
g_{\sigma, \mathcal{P}}(t_{\mathcal{P}})
=
\sum_{p \in \mathcal{P}}
g_{\sigma, p}(t_p)
\]
on  $t_{\mathcal{P}}=(t_p)_{p \in \mathcal{P}} \in \mathcal{T}_{\mathcal{P}}$, where
\[
g_{\sigma, p}(t_p)=-\log(1-t_p p^{-\sigma}).
\]
For any $s=\sigma+i\tau$ $(\sigma >1/2)$ we have 
\begin{align}\label{g_sigma}
\lefteqn{
\log L_{\mathcal{P}}(\mathrm{Sym}_f^{\mu}, \mathrm{Sym}_f^{\nu}, s)}
\nonumber\\
&=
\log L_{\mathcal{P}}(\mathrm{Sym}_f^{\mu},  s)
-
\log L_{\mathcal{P}}(\mathrm{Sym}_f^{\nu},  s)
\nonumber\\
&=
\sum_{p \in \mathcal{P}} \Big(
-\sum_{h=0}^{\mu} 
\log(1-\alpha_f^{\mu-h}(p)\beta_f^h(p)p^{-s})
+
\sum_{h=0}^{\nu} 
\log(1-\alpha_f^{\nu-h}(p)\beta_f^h(p)p^{-s})
\Big)
\nonumber\\
&=
\sum_{p \in \mathcal{P}} \Big(
-\log(1-\alpha_f^{\mu}(p)p^{-s})
-
\log(1-\beta_f^{\mu}(p)p^{-s})
\Big)
\nonumber\\
&=
\sum_{p \in \mathcal{P}} 
\Big(
g_{\sigma, p}(\alpha_f^{\mu}(p) p^{-i\tau})
+g_{\sigma, p}(\beta_f^{\mu}(p) p^{-i\tau})
\Big)
\nonumber\\
&=
g_{\sigma, \mathcal{P}}(\alpha_f^{\mu}(\mathcal{P}) \mathcal{P}^{-i\tau})
+g_{\sigma, \mathcal{P}}(\beta_f^{\mu}(\mathcal{P}) \mathcal{P}^{-i\tau}),
\end{align}
where $\alpha_f^{\mu}(\mathcal{P})=(\alpha_f^{\mu}(p))_{p\in\mathcal{P}}$, 
$\beta_f^{\mu}(\mathcal{P})=(\beta_f^{\mu}(p))_{p\in\mathcal{P}}$ and
$\mathcal{P}^{-i\tau}=(p^{-i\tau})_{p\in\mathcal{P}}$.
In the above equation, we used the fact that
$\beta_f(p)$ is the complex conjugate of $\alpha_f(p)$, and hence
$\alpha_f^{\mu-h}(p)\beta_f^h(p)=\alpha_f^{\nu-(h-1)}(p)\beta_f^{h-1}(p)$ 
($1\leq h\leq \mu-1$).
Hereafter, we sometimes write $\alpha_f(p)=e^{i\theta_f(p)}$ and  
$\beta_f(p)=e^{-i\theta_f(p)}$.

In the case $\sigma>1$, 
we deal with the value  
$L_{\mathbb{P}(q)}(\mathrm{Sym}_f^{\gamma}, \sigma+i\tau)$
as the limit of the value  
$L_{\mathcal{P}}(\mathrm{Sym}_f^{\gamma}, \sigma+i\tau)$
as $\mathcal{P}$ tends to $\mathbb{P}(q)$.
In fact, from \eqref{g_sigma} we have 
\begin{align}\label{g_sigma2}
&
\log L_{\mathbb{P}(q)}(\mathrm{Sym}_f^{\mu}, \mathrm{Sym}_f^{\nu}, \sigma+i\tau)
\nonumber\\
&\qquad =
\lim_{\substack{\mathcal{P}\to\mathbb{P}(q)\\ \mathcal{P}\subset {\mathbb{P}(q)}}}
(
g_{\sigma, \mathcal{P}}(\alpha_f^{\mu}(\mathcal{P}) \mathcal{P}^{-i\tau})
+g_{\sigma, \mathcal{P}}(\beta_f^{\mu}(\mathcal{P}) \mathcal{P}^{-i\tau})
).
\end{align}

In the case $1\geq \sigma>1/2$, 
we will prove the relation between
$\log L_{\mathbb{P}(q)}(\mathrm{Sym}_f^{\gamma}, \sigma)$
and
$\log L_{\mathcal{P}}(\mathrm{Sym}_f^{\gamma}, \sigma)$
with a suitable finite subset $\mathcal{P} \subset \mathbb{P}(q)$ 
depending on $q^m$ and will consider the averages of them.
This will be given in Lemma~\ref{appSymL} in Section~5.

Now we conclude this section with the proof of Corollary~\ref{main-cor}.

\begin{proof}[Proof of Corollary~\ref{main-cor}]
Write
\[
A(q^m)=\sum_{f \in S_k(q^m)}^{\qquad\prime} \Psi (
\log L_{\mathbb{P}(q)}(\mathrm{Sym}_f^{\mu},\sigma)-\log L_{\mathbb{P}(q)} (\mathrm{Sym}_f^{\nu}, \sigma)).
\]
The $\mathrm{Avg}_{\mathrm{prime}}$ part of the theorem implies that, for any
$\varepsilon>0$, there exists a $Q_0=Q_0(\varepsilon)$ for each fixed $m$ such that
\[
\Biggl|A(q^m)
-\int_{\mathbb{R}}\mathcal{M}_{\sigma}(u)
\Psi (u) \frac{du}{\sqrt{2\pi}}\Biggr|<\varepsilon
\]
for any prime $q>Q_0$.    This clearly implies the first part of the corollary.
As for the second part, first note that $\pi^*(X)$ in the denominator can be
replaced by $\pi(X)$, because $\lim_{X\to\infty}\pi(X)/\pi^*(X)=1$.
We divide the sum as
\begin{align}\label{pr-cor}
&\frac{1}{\pi(X)} \sum_{\substack{q^m \leq X\\ q :\mathrm{prime}\\ m\geq 1}} A(q^m)
\notag\\
&=\frac{1}{\pi(X)}\sum_{q\leq X}A(q)+
\frac{1}{\pi(X)}\sum_{2\leq m\leq [\log X/\log 2]}\sum_{q\leq X^{1/m}}A(q^m).
\end{align}
Using \eqref{P1} and the fact that $\Psi$ is bounded, we find that $A(q^m)$ is
bounded.    Hence the second term on the right-hand side of \eqref{pr-cor} is
\[
\ll \frac{1}{\pi(X)}\sum_m \pi(X^{1/m})
\leq \frac{1}{\pi(X)}\sum_m \pi(X^{1/2})
\ll X^{-1/2}\log X,
\]
which tends to 0 as $X\to\infty$.   
Lastly we apply the case $m=1$ of the first part of the corollary to
the first term on the right-hand side of \eqref{pr-cor} to find that it tends 
to the desired integral. 
\end{proof}

%
%
\section{The density function ${\mathcal{M}}_{\sigma}$.}

Now we start the proof of our main theorem.
In this section we first construct the density function 
${\mathcal{M}}_{\sigma, \mathcal{P}}$
for a finite set $\mathcal{P} \subset \mathbb{P}(q)$.
By $|\mathcal{P}|$ we denote the number of the elements of $\mathcal{P}$.
%
%
\begin{prop}~\label{M_P}
For any $\sigma >0$, there exists a non-negative function
${\mathcal{M}}_{\sigma, \mathcal{P}}$ defined on $\mathbb{R}$
which satisfies following two properties.
\begin{itemize}
\item The support of ${\mathcal{M}}_{\sigma, \mathcal{P}}$ is compact.
\item For any continuous function $\Psi$ on $\mathbb{R}$
, we have
\[
\int_{\mathbb{R}}
{\mathcal{M}}_{\sigma, \mathcal{P}}(u)\Psi(u) \frac{du}{\sqrt{2\pi}}
=
\int_{\mathcal{T}_{\mathcal{P}}} 
\Psi(2\Re(g_{\sigma, \mathcal{P}}( t_{\mathcal{P}})))
d^{*} t_{\mathcal{P}},
\]
where $d^* t_{\mathcal{P}}$ is the normalized 
Haar measure of $\displaystyle \mathcal{T}_{\mathcal{P}}$.
In particular, taking $\Psi\equiv 1$, we have
$\displaystyle \int_{\mathbb{R}}{\mathcal{M}}_{\sigma, \mathcal{P}}(u) \frac{du}{\sqrt{2\pi}}=1$. 
\end{itemize}
\end{prop}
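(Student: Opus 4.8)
The plan is to build ${\mathcal{M}}_{\sigma,\mathcal{P}}$ as the pushforward (image measure) of the normalized Haar measure on $\mathcal{T}_{\mathcal{P}}$ under the map $\Phi:\mathcal{T}_{\mathcal{P}}\to\mathbb{R}$ given by $\Phi(t_{\mathcal{P}})=2\Re\bigl(g_{\sigma,\mathcal{P}}(t_{\mathcal{P}})\bigr)$, and then to show this pushforward measure has a density with respect to Lebesgue measure. First I would observe that $\Phi$ is a continuous function on the compact group $\mathcal{T}_{\mathcal{P}}$, so its range is a compact interval $[-R,R]$ with $R=\sum_{p\in\mathcal{P}}R_p$, where $R_p$ bounds $2|g_{\sigma,p}(t_p)|=2|\log(1-t_pp^{-\sigma})|$ for $|t_p|=1$ (finite since $p^{-\sigma}<1$; note $p\ge 2$ and $\sigma>0$). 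This immediately gives the compact-support requirement once we know the measure has a density. The integral identity in the second bullet is then \emph{by construction} the change-of-variables formula for pushforward measures: $\int_{\mathbb{R}}\Psi(u)\,d(\Phi_*d^*t_{\mathcal{P}})(u)=\int_{\mathcal{T}_{\mathcal{P}}}\Psi(\Phi(t_{\mathcal{P}}))\,d^*t_{\mathcal{P}}$, valid for every bounded continuous (in particular every continuous, since the domain is compact) $\Psi$. Taking $\Psi\equiv 1$ gives total mass $1$ because $d^*t_{\mathcal{P}}$ is a probability measure. The normalization by $1/\sqrt{2\pi}$ is absorbed into the definition of ${\mathcal{M}}_{\sigma,\mathcal{P}}$, i.e. we set ${\mathcal{M}}_{\sigma,\mathcal{P}}=\sqrt{2\pi}\,\frac{d(\Phi_*d^*t_{\mathcal{P}})}{du}$.

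The real content, and the step I expect to be the main obstacle, is showing that $\Phi_*d^*t_{\mathcal{P}}$ is \emph{absolutely continuous} with respect to Lebesgue measure, so that the density ${\mathcal{M}}_{\sigma,\mathcal{P}}$ actually exists as a function on $\mathbb{R}$ rather than merely as a measure. The natural route is via Fourier analysis: the characteristic function (Fourier transform) of $\Phi_*d^*t_{\mathcal{P}}$ is
\[
\widehat{\Phi_*d^*t_{\mathcal{P}}}(\xi)=\int_{\mathcal{T}_{\mathcal{P}}}e^{i\xi\Phi(t_{\mathcal{P}})}\,d^*t_{\mathcal{P}}
=\prod_{p\in\mathcal{P}}\int_{\mathcal{T}}e^{2i\xi\Re(g_{\sigma,p}(t_p))}\,d^*t_p,
\]
using that $\Phi$ splits as a sum over $p$ and $d^*t_{\mathcal{P}}$ is a product measure. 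I would then estimate each factor $\widehat{\lambda_p}(\xi):=\int_{\mathcal{T}}e^{2i\xi\Re(g_{\sigma,p}(t_p))}\,d^*t_p$. Writing $t_p=e^{i\phi}$, one has $2\Re(g_{\sigma,p}(e^{i\phi}))=-\log(1-2p^{-\sigma}\cos\phi+p^{-2\sigma})$, which is a genuinely non-constant real-analytic function of $\phi$ with non-degenerate critical points; stationary phase (or van der Corput's estimate) then yields a bound $|\widehat{\lambda_p}(\xi)|\le C_p|\xi|^{-1/2}$ for $|\xi|\ge 1$. Since $\sum_{p\in\mathcal{P}}$ is finite, if $|\mathcal{P}|\ge 3$ the product is $O(|\xi|^{-|\mathcal{P}|/2})=O(|\xi|^{-3/2})$, hence integrable, and Fourier inversion produces a bounded continuous density; for $|\mathcal{P}|\le 2$ one either argues directly (the one- and two-variable cases can be handled by an explicit change of variables, since $\phi\mapsto 2\Re g_{\sigma,p}(e^{i\phi})$ is piecewise monotone) or notes that the proposition only needs to be applied for large $\mathcal{P}$ in the sequel, but it is cleanest to record the small cases separately.

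An alternative, and perhaps more in keeping with the Bohr–Jessen and Ihara–Matsumoto tradition referenced in the introduction, is to avoid Fourier estimates entirely by an inductive convolution argument: the density of a sum of two independent random variables is the convolution of their densities, and convolution with any $L^1$ (indeed any finite-measure) density produces an $L^1$ density; so it suffices to exhibit, for at least one suitable $p_0\in\mathcal{P}$, the density of $t_{p_0}\mapsto 2\Re g_{\sigma,p_0}(t_{p_0})$ as an honest function, and then convolve with the (possibly atomic) distributions coming from the remaining primes. The single-prime density is computed by pushing forward the uniform measure on the circle under $\phi\mapsto-\log(1-2p_0^{-\sigma}\cos\phi+p_0^{-2\sigma})$: on each monotone branch this is the reciprocal of the derivative, giving an explicit (integrable, with inverse-square-root singularities at the two endpoints $\phi=0,\pi$) formula. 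I would present this convolution construction as the main line, since it also makes the phrase ``can be explicitly constructed'' in Theorem~\ref{main} transparent, and mention the Fourier/stationary-phase bound as the tool guaranteeing smoothness when $|\mathcal{P}|$ is large, which will be needed later for uniform estimates as $\mathcal{P}\to\mathbb{P}(q)$. The compactness of the support then follows since convolution of compactly supported measures has support contained in the (Minkowski) sum of the supports, which is the bounded interval $[-R,R]$ identified above.
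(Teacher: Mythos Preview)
Your proposal is correct, and the convolution construction you settle on as the main line is exactly the paper's approach: the paper computes the single-prime density $\mathcal{M}_{\sigma,p}$ explicitly via the change of variables $u=-2\log|1-e^{i\theta}p^{-\sigma}|$ on $\theta\in(-\pi,0)$ (yielding the reciprocal-of-derivative formula you anticipate, with the inverse-sine singularities at the endpoints), verifies the integral identity directly for $|\mathcal{P}|=1$, and then defines $\mathcal{M}_{\sigma,\mathcal{P}}$ inductively by convolution for $|\mathcal{P}|>1$. The paper skips your pushforward/Fourier preamble entirely and goes straight to this explicit construction; the Jessen--Wintner-type $O((1+|x|)^{-1/2})$ decay of the local Fourier transforms that you sketch is invoked only \emph{after} this proposition, to control the limit as $\mathcal{P}\to\mathbb{P}(q)$.
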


\begin{proof}
We construct the function ${\mathcal{M}}_{\sigma, \mathcal{P}}$ by using the method
similar to that in 
Ihara and the first author~\cite{i-1stAuthor}.

In the case $|\mathcal{P}|=1$ namely $\mathcal{P}=\{p\}$, 
we define a one-to-one correspondence from the 
open set $(-\pi, 0)$ to its image $A(\sigma, p)\subset \mathbb{R}$ 
by 
\[
u=u(\theta)=-2\log|1-e^{i\theta}p^{-\sigma}|
\]
for $\theta\in (-\pi, 0)$. 
In fact, since
\begin{align}\label{dudtheta}
\frac{du}{d\theta}=-\frac{2p^{-\sigma}\sin(\theta)}
{|1-e^{i\theta}p^{-\sigma}|^2},
\end{align}
we see that $u$ is monotonically incrasing with respect to $\theta$, 
hence one to one.
The definition of ${\mathcal{M}}_{\sigma, \mathcal{P}}={\mathcal{M}}_{\sigma, p}$ is
%
%
\[
{\mathcal{M}}_{\sigma, p}(u)
=
\begin{cases}
\displaystyle \frac{|1-e^{i\theta}p^{-\sigma}|^2}{-\sqrt{2\pi}\sin(\theta)p^{-\sigma}} 
    & u \in A(\sigma,  p),\\
  0 & \mbox{otherwise}.
\end{cases}
\]
This function satisfies the properties of Proposition~\ref{M_P}.
In fact, using \eqref{dudtheta}
we obtain
\begin{align*}
&
\int_{\mathbb{R}}
\Psi(u){\mathcal{M}}_{\sigma, p}(u) \frac{du}{\sqrt{2\pi}}
=
\int_{A(\sigma,  p)}
\Psi(u){\mathcal{M}}_{\sigma, p}(u) \frac{du}{\sqrt{2\pi}}
\\
=&
\lim_{t_1, t_2 \to 0}
\int_{-\pi+t_1}^{-t_2}
\Psi(-2\log|1-e^{i\theta} p^{-\sigma}|)
\\
&\times
\frac{|1-e^{i\theta} p^{-\sigma}|^2}
     {(-\sqrt{2\pi}\sin\theta p^{-\sigma})}
\cdot
\frac{(-2\sin(\theta) p^{-\sigma})}
     {|1-e^{i\theta} p^{-\sigma}|^2}
\cdot
\frac{d\theta}{\sqrt{2\pi}}
\\
=&
\lim_{t_1, t_2 \to 0}
\frac{1}{\pi}
\int_{-\pi+t_1}^{-t_2}
\Psi(-2\log|1-e^{i\theta} p^{-\sigma}|)
d\theta
\\
=&
\frac{1}{\pi}
\int_0^{\pi}
\Psi(-2\log|1-e^{i\theta} p^{-\sigma}|)
d\theta
\\
=&
\frac{1}{2\pi}
\int_{-\pi}^{\pi}
\Psi(-2\log|1-e^{i\theta} p^{-\sigma}|)
d\theta
\\
=&
\int_{\mathcal{T}_p}
\Psi(-2\log|1-t_p p^{-\sigma}|)
d^*t_{p}
\\
=&
\int_{\mathcal{T}_{p}}
\Psi(2\Re(g_{\sigma, p}(t_{p})))
d^*t_{p}.
\end{align*}

%
%
In the case $|\mathcal{P}|>1$, 
we construct the function ${\mathcal{M}}_{\sigma, \mathcal{P}}$ by 
the convolution product of ${\mathcal{M}}_{\sigma, \mathcal{P}'}$ 
and ${\mathcal{M}}_{\sigma, p}$
for $\mathcal{P}=\mathcal{P}'\cup\{p\}$ inductively, that is
\[
{\mathcal{M}}_{\sigma, \mathcal{P}}(u)
=\int_{\mathbb{R}}{\mathcal{M}}_{\sigma, \mathcal{P}'}(u'){\mathcal{M}}_{\sigma, p}(u-u') \frac{du'}{\sqrt{2\pi}}.
\]
It is easy to show that 
this function satisfies the statements of Proposition~\ref{M_P}.
\end{proof}

%
%
Secondly, for the purpose of considering 
$\displaystyle \lim_{|\mathcal{P}|\to \infty} {\mathcal{M}}_{\sigma, \mathcal{P}}$, 
we define the Fourier 
transform of ${\mathcal{M}}_{\sigma, \mathcal{P}}$. 

When $\mathcal{P}=\{p\}$, we define $\widetilde{\mathcal{M}}_{\sigma, p}$ by
\[
\widetilde{\mathcal{M}}_{\sigma, p}(x)
=\int_{\mathbb{R}}{\mathcal{M}}_{\sigma, p}(u)\psi_x(u) \frac{du}{\sqrt{2\pi}},
\]
where $\psi_x(u)=e^{ixu}$.
(The Fourier transform is sometimes defined by using $e^{-ixu}$ instead of
$e^{ixu}$, but here we follow the notation in \cite{ihara} and
\cite{i-1stAuthor}.)
As Ihara and the first author discussed in p.644 of \cite{i-1stAuthor}, 
we can show  
\[
\widetilde{\mathcal{M}}_{\sigma, p}(x)=O\left((1+|x|)^{-1/2}\right)
\]
by using the Jessen-Wintner Theorem \cite{jw}.
We define $\widetilde{\mathcal{M}}_{\sigma, \mathcal{P}}(x)$ by
\[
\widetilde{\mathcal{M}}_{\sigma, \mathcal{P}}(x)
=\prod_{p \in \mathcal{P}}  \widetilde{\mathcal{M}}_{\sigma, p}(x).
\]
Then we have
\begin{equation}\label{JW_P}
\widetilde{\mathcal{M}}_{\sigma, \mathcal{P}}(x) \ll \left((1+|x|)^{-|\mathcal{P}|/2}\right)
\end{equation}
from the above estimate of $\widetilde{\mathcal{M}}_{\sigma, p}$.
On the other hand, we have the trivial bound
\begin{equation}\label{trivial_P}
\left|\widetilde{\mathcal{M}}_{\sigma, \mathcal{P}}(x)\right|\leq 1.
\end{equation}

%
%
We can show the following properties (a), (b) and (c).
The proofs of them are also similar to \cite{i-1stAuthor}, pp.645-646.
\begin{itemize}
\item[(a).] $\widetilde{\mathcal{M}}_{\sigma, \mathcal{P}}(x)\in L^t$ $(t\in [1,\infty])$.
\item[(b).] For any subsets $\mathcal{P}'$ and $\mathcal{P}$  
of $\mathbb{P}(q)$ with $\mathcal{P}'\subset \mathcal{P}$,
from \eqref{trivial_P} we can see 
\[
\left|\widetilde{\mathcal{M}}_{\sigma, \mathcal{P}}(x)\right| \leq 
\left|\widetilde{\mathcal{M}}_{\sigma, \mathcal{P}'}(x)\right|.
\]

\item[(c).] Let $y\in\mathbb{N}$, and put
$\mathcal{P}_y=\{ p \in \mathbb{P}(q) \mid  p \leq y \} \subset \mathbb{P}(q)$.
We can show the existence of
$\displaystyle \lim_{y\to\infty} \widetilde{\mathcal{M}}_{\sigma, \mathcal{P}_y}(x)$
for $\sigma >1/2$.
We denote it by $\widetilde{\mathcal{M}}_{\sigma}(x)$.
For any $a >0$, this convergence is uniform on $|x|\leq a$.
\end{itemize}

These properties yield the next proposition which is the analogue of Proposition 3.4 in
\cite{i-1stAuthor}.

%
%
\begin{prop}\label{tildeM}
For $\varepsilon>0$ and $\sigma \geq 1/2+\varepsilon$,
there exists
\[
\widetilde{\mathcal{M}}_{\sigma}(x)
=\lim_{y\to\infty}\widetilde{\mathcal{M}}_{\sigma, \mathcal{P}_y}(x),
\]
whose convergence is uniform in $x\in \mathbb{R}$.
For each $\sigma >1/2$, 
the above convergence is $L^t$-convergence and 
the function $\widetilde{\mathcal{M}}_{\sigma}(x)$ belongs to 
$ L^t$ $(1\leq t \leq \infty)$.
\end{prop}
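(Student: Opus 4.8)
The plan is to deduce everything from properties (a)--(c) together with the decay bound \eqref{JW_P}, using a single fixed dominating function to run the dominated convergence theorem, and a two-region splitting of $\mathbb{R}$ for the uniform statement.

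First I would fix, once and for all, a finite reference set $\mathcal{P}_0\subset\mathbb{P}(q)$ of exactly three primes (e.g.\ the three smallest primes different from $q$). By \eqref{JW_P} there is then a constant $C_0$ --- which may be taken uniform for $\sigma\ge 1/2+\varepsilon$ --- such that $|\widetilde{\mathcal{M}}_{\sigma,\mathcal{P}_0}(x)|\le C_0(1+|x|)^{-3/2}$ for all $x\in\mathbb{R}$. It is essential that $\mathcal{P}_0$ has fixed, finite cardinality here: the implied constant in the single-prime estimate $\widetilde{\mathcal{M}}_{\sigma,p}(x)=O((1+|x|)^{-1/2})$ is not uniform in $p$ (for large $p$ the factor $p^{-\sigma}$ is tiny and $\widetilde{\mathcal{M}}_{\sigma,p}(x)$ stays close to $1$ on a long $x$-interval), so \eqref{JW_P} cannot be applied with $\mathcal{P}=\mathcal{P}_y$ and a $y$-independent constant. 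For all $y$ large enough that $\mathcal{P}_0\subset\mathcal{P}_y$, the monotonicity (b) upgrades this to the uniform majorant
\[
|\widetilde{\mathcal{M}}_{\sigma,\mathcal{P}_y}(x)|\le C_0(1+|x|)^{-3/2},
\]
and since by (c) the pointwise limit $\widetilde{\mathcal{M}}_{\sigma}(x)=\lim_{y\to\infty}\widetilde{\mathcal{M}}_{\sigma,\mathcal{P}_y}(x)$ exists, it satisfies the same bound. The function $(1+|x|)^{-3/2}$ is bounded and lies in $L^1(\mathbb{R})$, hence in $L^t(\mathbb{R})$ for every $t\in[1,\infty]$; this already gives $\widetilde{\mathcal{M}}_\sigma\in L^t$ for all such $t$.

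For the uniform (equivalently $L^\infty$) convergence I would split according to $|x|\le a$ and $|x|>a$. Given $\delta>0$, pick $a$ with $C_0(1+a)^{-3/2}<\delta/2$; then on $|x|>a$ both $|\widetilde{\mathcal{M}}_{\sigma,\mathcal{P}_y}(x)|$ and $|\widetilde{\mathcal{M}}_\sigma(x)|$ are $<\delta/2$, so their difference is $<\delta$ there. On the compact range $|x|\le a$, property (c) already provides uniform convergence (and, taking the rate there uniform in $\sigma\ge 1/2+\varepsilon$, the conclusion is uniform in $\sigma$ over that range as well). Combining the two ranges shows $\widetilde{\mathcal{M}}_{\sigma,\mathcal{P}_y}\to\widetilde{\mathcal{M}}_\sigma$ uniformly in $x$. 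For $L^t$-convergence with $1\le t<\infty$, I would invoke dominated convergence: by the majorant above,
\[
\bigl|\widetilde{\mathcal{M}}_{\sigma,\mathcal{P}_y}(x)-\widetilde{\mathcal{M}}_\sigma(x)\bigr|^t\le (2C_0)^t(1+|x|)^{-3t/2},
\]
which is integrable since $3t/2>1$, while the left-hand side tends to $0$ pointwise by (c); hence $\int_{\mathbb{R}}|\widetilde{\mathcal{M}}_{\sigma,\mathcal{P}_y}(x)-\widetilde{\mathcal{M}}_\sigma(x)|^t\,dx\to0$.

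The splitting argument and the application of dominated convergence are routine. The point that requires care, and which I view as the main obstacle, is producing a $y$-independent majorant in $L^1$ (indeed with integrable $t$-th power for every $t\ge1$): this is precisely why one fixes a finite reference set $\mathcal{P}_0$ of at least three primes and transports its decay bound \eqref{JW_P} to all larger $\mathcal{P}_y$ via (b), instead of using \eqref{JW_P} for $\mathcal{P}_y$ with $|\mathcal{P}_y|\to\infty$ but an uncontrolled constant. A secondary technical check is that the convergence rate in (c) and the constant $C_0$ may be chosen uniformly for $\sigma\ge 1/2+\varepsilon$, so that the asserted uniformity is genuine in $x$ (and in $\sigma$ on the stated range) rather than only pointwise in $\sigma$.
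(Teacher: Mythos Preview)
Your argument is correct and is precisely the approach the paper has in mind: the paper simply states that properties (a), (b), (c) (together with \eqref{JW_P}) yield the proposition, deferring to the analogous Proposition~3.4 in \cite{i-1stAuthor}, and your write-up is a faithful expansion of that route---fix a small reference set to get a $y$-independent $L^1\cap L^\infty$ majorant via \eqref{JW_P} and (b), then use (c) on compacta plus the tail bound for uniform convergence, and dominated convergence for $L^t$. Your care in noting that \eqref{JW_P} must be applied with a fixed finite $\mathcal{P}_0$ (since the implied constant depends on $\mathcal{P}$) is exactly the point behind the paper's listing of (b) as a separate ingredient.
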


By using \eqref{JW_P} and \eqref{trivial_P}, we have
\begin{equation}\label{tildeJW}
\widetilde{\mathcal{M}}_{\sigma}(x)=O\left( (1+|x|)^{-n/2}\right)
\end{equation}
for any $n\in\mathbb{N}$. 
We also have
\begin{equation}\label{tildeTrivial}
|\widetilde{\mathcal{M}}_{\sigma}(x)|\leq 1.
\end{equation}

Finally, we define the function ${\mathcal{M}}_{\sigma}(u)$.
For any finite set $\mathcal{P} \subset \mathbb{P}(q)$, 
we have
\[
\int_{\mathbb{R}}
\widetilde{\mathcal{M}}_{\sigma, \mathcal{P}}(x)\psi_{-u}(x) 
\frac{dx}{\sqrt{2\pi}}
=
{\mathcal{M}}_{\sigma, \mathcal{P}}(u).
\]
This is the Fourier inverse transform.
We define
\[
{\mathcal{M}}_{\sigma}(u)
=\int_{\mathbb{R}}\widetilde{\mathcal{M}}_{\sigma}(x)\psi_{-u}(x)\frac{dx}{\sqrt{2\pi}},
\]
where we can see that the right-hand side of this equation is convergent
by using \eqref{tildeJW}.

\begin{prop}\label{M}
For $\sigma>1/2$, the function ${\mathcal{M}}_{\sigma}$ satisfies following
five properties.
\begin{itemize}
\item $\displaystyle \lim_{y\to\infty} {\mathcal{M}}_{\sigma, \mathcal{P}_y}(u)
={\mathcal{M}}_{\sigma}(u)$
and this convergence is uniform in $u$.
\item The function ${\mathcal{M}}_{\sigma}(u)$ is continuous in $u$ and non-negative.
\item $\displaystyle \lim_{u \to\infty} {\mathcal{M}}_{\sigma}(u)=0$.
\item The functions ${\mathcal{M}}_{\sigma}(u)$ and $\widetilde{\mathcal{M}}_{\sigma}(x)$ 
are Fourier duals of each other.
\item $\displaystyle \int_{\mathbb{R}} {\mathcal{M}}_{\sigma}(u)\frac{du}{\sqrt{2\pi}}=1$.
\end{itemize}
\end{prop}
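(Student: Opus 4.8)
The plan is to read off all five properties from the Fourier pairing between $\mathcal{M}_{\sigma}$ and $\widetilde{\mathcal{M}}_{\sigma}$, following the pattern of the finite-$\mathcal{P}$ identities of Proposition~\ref{M_P} but taking care over the passage $y\to\infty$. The two facts I would use repeatedly are that $\widetilde{\mathcal{M}}_{\sigma}\in L^{1}(\mathbb{R})$, which is immediate from the rapid decay \eqref{tildeJW} (take $n\geq 2$), and that the convergence $\widetilde{\mathcal{M}}_{\sigma,\mathcal{P}_{y}}\to\widetilde{\mathcal{M}}_{\sigma}$ is uniform on all of $\mathbb{R}$ by Proposition~\ref{tildeM}.

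The first property is the only point that requires genuine work. I would write
\[
\mathcal{M}_{\sigma,\mathcal{P}_{y}}(u)-\mathcal{M}_{\sigma}(u)
=\int_{\mathbb{R}}\bigl(\widetilde{\mathcal{M}}_{\sigma,\mathcal{P}_{y}}(x)-\widetilde{\mathcal{M}}_{\sigma}(x)\bigr)\psi_{-u}(x)\,\frac{dx}{\sqrt{2\pi}}
\]
and split the integral at $|x|=a$. Fix once and for all a $y_{0}$ with $|\mathcal{P}_{y_{0}}|\geq 3$; then property (b) and \eqref{JW_P} give, for every $y\geq y_{0}$, the bound $|\widetilde{\mathcal{M}}_{\sigma,\mathcal{P}_{y}}(x)|\leq|\widetilde{\mathcal{M}}_{\sigma,\mathcal{P}_{y_{0}}}(x)|\ll(1+|x|)^{-3/2}$, and the same estimate holds for $\widetilde{\mathcal{M}}_{\sigma}$ by \eqref{tildeJW}. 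Hence for $a$ large the contribution of $|x|>a$ is $<\varepsilon$ uniformly in $u$ and in $y\geq y_{0}$, while on $|x|\leq a$ the uniform convergence of Proposition~\ref{tildeM} makes the remaining integral $<\varepsilon$ for all large $y$. Since none of these estimates depends on $u$, this gives uniform convergence in $u$; in particular $\mathcal{M}_{\sigma,\mathcal{P}_{y}}(u)\to\mathcal{M}_{\sigma}(u)$ pointwise. Continuity of $\mathcal{M}_{\sigma}$ is then automatic, since $\mathcal{M}_{\sigma}$ is the Fourier transform of the $L^{1}$-function $\widetilde{\mathcal{M}}_{\sigma}$; the Riemann--Lebesgue lemma for the same $L^{1}$-function gives $\lim_{u\to\infty}\mathcal{M}_{\sigma}(u)=0$; and non-negativity follows by passing to the pointwise limit, since each $\mathcal{M}_{\sigma,\mathcal{P}_{y}}$ is non-negative by Proposition~\ref{M_P}.

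For the Fourier-duality and total-mass properties, first note that the pointwise convergence together with $\int_{\mathbb{R}}\mathcal{M}_{\sigma,\mathcal{P}_{y}}(u)\,du/\sqrt{2\pi}=1$ (Proposition~\ref{M_P}) and Fatou's lemma yields $\mathcal{M}_{\sigma}\in L^{1}(\mathbb{R})$ with $\int_{\mathbb{R}}\mathcal{M}_{\sigma}(u)\,du/\sqrt{2\pi}\leq 1$. Now both $\widetilde{\mathcal{M}}_{\sigma}$ and $\mathcal{M}_{\sigma}=\int_{\mathbb{R}}\widetilde{\mathcal{M}}_{\sigma}(x)\psi_{-u}(x)\,dx/\sqrt{2\pi}$ lie in $L^{1}$, so the Fourier inversion theorem applies and, after matching the sign convention $\psi_{x}(u)=e^{ixu}$, shows that $\mathcal{M}_{\sigma}$ and $\widetilde{\mathcal{M}}_{\sigma}$ are Fourier duals of each other, i.e.\ $\widetilde{\mathcal{M}}_{\sigma}(x)=\int_{\mathbb{R}}\mathcal{M}_{\sigma}(u)\psi_{x}(u)\,du/\sqrt{2\pi}$. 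Evaluating this at $x=0$ gives $\int_{\mathbb{R}}\mathcal{M}_{\sigma}(u)\,du/\sqrt{2\pi}=\widetilde{\mathcal{M}}_{\sigma}(0)$, and $\widetilde{\mathcal{M}}_{\sigma}(0)=\lim_{y\to\infty}\prod_{p\in\mathcal{P}_{y}}\widetilde{\mathcal{M}}_{\sigma,p}(0)=1$ because each $\widetilde{\mathcal{M}}_{\sigma,p}(0)=\int_{\mathbb{R}}\mathcal{M}_{\sigma,p}(u)\,du/\sqrt{2\pi}=1$; this is the fifth property. The main obstacle throughout is precisely the uniform-in-$u$ tail control needed for the first property; once the monotonicity (b) and the Jessen--Wintner decay \eqref{JW_P} (applied to one fixed $\mathcal{P}_{y_{0}}$ with at least three primes) are in hand, everything else is a routine application of dominated/monotone convergence and Fourier inversion, as in \cite{i-1stAuthor}, pp.\,645--646.
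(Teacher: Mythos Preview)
Your proposal is correct and follows essentially the same Fourier-analytic route the paper intends: the paper does not spell out a proof here but simply declares Proposition~\ref{M} to be the analogue of Proposition~3.5 in \cite{i-1stAuthor} with a similar proof, and your argument---splitting the inverse Fourier integral, using the monotonicity (b) together with \eqref{JW_P} for uniform tail control, and then invoking Riemann--Lebesgue and Fourier inversion---is precisely the standard reconstruction of that argument.
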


This is the analogue of Proposition 3.5 in \cite{i-1stAuthor} and
the proof is similar.

%
%
\section{The key lemma.}


For a fixed $\sigma>1/2$, $\tau\in\mathbb{R}$ and a finite set 
$\mathcal{P}\subset \mathbb{P}(q)$,
we put
\[
\Phi_{\sigma, \tau, \mathcal{P}}(t_{\mathcal{P}}, t'_{\mathcal{P}})
=\sum_{ p\in \mathcal{P}}
(g_{\sigma,  p}(t_p p^{-i\tau})+g_{\sigma,  p}({t'}_p p^{-i\tau})),
\]
where $t_{\mathcal{P}}=(t_p)_{p\in\mathcal{P}},
t'_{\mathcal{P}}=({t'}_p)_{p\in\mathcal{P}}\in\mathcal{T}_{\mathcal{P}}$.
From \eqref{g_sigma} we see that
\[
\psi_x\circ \Phi_{\sigma, \tau, \mathcal{P}}(\alpha_f^{\mu}(\mathcal{P}), \beta_f^{\mu}(\mathcal{P}))
=
\psi_x(\log L_{\mathcal{P}}(\mathrm{Sym}_f^{\mu}, \mathrm{Sym}_f^{\nu}, \sigma+i\tau)),
\]
where $\psi_x(u)=\exp(ixu)$
.
Therefore, to prove our Theorem~\ref{main}, it is important to consider two averages 
\begin{align*}
\mathrm{avg}_{\mathrm{prime}}
(\psi_x \circ \Phi_{\sigma, \tau, \mathcal{P}})
=&
\lim_{\substack{q\to\infty\\ q:\mathrm{prime}\\ m:\mathrm{fix}}}
\sum_{f\in S_k(q^m)}^{\qquad\prime}
\psi_x\circ \Phi_{\sigma, \tau, \mathcal{P}}(\alpha_f^{\mu}(\mathcal{P}), \beta_f^{\mu}(\mathcal{P}))
\end{align*}
and
\begin{align*}
\mathrm{avg}_{\mathrm{power}}(\psi_x \circ \Phi_{\sigma, \tau, \mathcal{P}})
=&
\lim_{\substack{m\to\infty\\ q:\mathrm{fixed}\; \mathrm{prime}\\ (\mathrm{If}\; 1\geq \sigma > 1/2,\; q\geq Q(\mu))}}
\sum_{f\in S_k(q^m)}^{\qquad\prime}
\psi_x\circ \Phi_{\sigma, \tau, \mathcal{P}}(\alpha_f^{\mu}(\mathcal{P}), \beta_f^{\mu}(\mathcal{P})).
\end{align*}

Our first aim in this section is to show the following

%
%
\begin{lem}\label{key}
Let $\mu > \nu \geq 1$ be integers with $\mu-\nu=2$ and
$\mathcal{P}$ be a finite subset of $\mathbb{P}(q)$. 
In the case $2\leq k < 12$ or $k=14$, we have
\begin{align*}
\mathrm{avg}_{\mathrm{prime}} (\psi_x \circ \Phi_{\sigma, \tau, \mathcal{P}})
=&
\mathrm{avg}_{\mathrm{power}} (\psi_x \circ \Phi_{\sigma, \tau, \mathcal{P}})
\\
=&
\int_{\mathcal{T}_{\mathcal{P}}}
\psi_x 
\left(
\Phi_{\sigma, \tau, \mathcal{P}}(t_{\mathcal{P}}, t_{\mathcal{P}}^{-1})
\right)
d^* t_{\mathcal{P}}.
\end{align*}
The above convergence is uniform in $|x|\leq R$ for any $R>0$.
\end{lem}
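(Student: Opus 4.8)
The plan is to reduce both averages to a single equidistribution statement for the points $\Theta_f:=(\alpha_f^{\mu}(p))_{p\in\mathcal{P}}\in\mathcal{T}_{\mathcal{P}}$ and to verify it by a Weyl-type argument whose ``characters'' are supplied by the Hecke relations \eqref{euler} together with the Petersson-type formula \eqref{P}. First I would observe that, since $\beta_f^{\mu}(p)=\alpha_f^{\mu}(p)^{-1}$ for $p\neq q$, formula \eqref{g_sigma} gives $\psi_x\circ\Phi_{\sigma,\tau,\mathcal{P}}(\alpha_f^{\mu}(\mathcal{P}),\beta_f^{\mu}(\mathcal{P}))=H(\Theta_f)$, where $H(t_{\mathcal{P}}):=\psi_x(\Phi_{\sigma,\tau,\mathcal{P}}(t_{\mathcal{P}},t_{\mathcal{P}}^{-1}))$, and that $H$ is invariant under each inversion $t_p\mapsto t_p^{-1}$ (which merely exchanges the summands $g_{\sigma,p}(t_pp^{-i\tau})$ and $g_{\sigma,p}(t_p^{-1}p^{-i\tau})$). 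Thus both $\mathrm{avg}_{\mathrm{prime}}$ and $\mathrm{avg}_{\mathrm{power}}$ of $\psi_x\circ\Phi_{\sigma,\tau,\mathcal{P}}$ are limits of $\sum_f^{\prime}H(\Theta_f)$ as $q^m\to\infty$, the only difference being whether this happens through $q\to\infty$ with $m$ fixed or through $m\to\infty$ with $q$ fixed; in either regime it suffices to prove $\sum_f^{\prime}H(\Theta_f)\to\int_{\mathcal{T}_{\mathcal{P}}}H\,d^{*}t_{\mathcal{P}}$, uniformly for $|x|\le R$. The measures $\nu_{q^m}:=\sum_f^{\prime}\delta_{\Theta_f}$ are positive with total mass $\sum_f^{\prime}1=1+E(q^m)$, which by \eqref{P1} is bounded and tends to $1$; hence, by the Stone--Weierstrass theorem, it is enough to treat the symmetrized monomials $\phi_{\mathbf n}(t_{\mathcal P}):=\prod_{p\in\mathcal P}(t_p^{n_p}+t_p^{-n_p})$, $\mathbf n=(n_p)_{p\in\mathcal P}\in\mathbb{Z}_{\ge0}^{\mathcal P}$, since their linear span is uniformly dense in the inversion-invariant continuous functions on $\mathcal T_{\mathcal P}$.

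Next comes the arithmetic input. For $p\neq q$ the relations \eqref{euler} with $\alpha_f(p)\beta_f(p)=1$ give $\alpha_f(p)^{j}+\beta_f(p)^{j}$ equal to $2$ when $j=0$, to $\lambda_f(p)$ when $j=1$, and to $\lambda_f(p^{j})-\lambda_f(p^{j-2})$ when $j\ge 2$. Since $\mu\ge 3$, for $\mathbf n\neq 0$ one can therefore write
\[
\phi_{\mathbf n}(\Theta_f)=2^{\#\{p:\,n_p=0\}}\prod_{p:\,n_p>0}\bigl(\lambda_f(p^{\mu n_p})-\lambda_f(p^{\mu n_p-2})\bigr),
\]
expand the product, and use the multiplicativity of $\lambda_f$ to express $\phi_{\mathbf n}(\Theta_f)$ as a finite $\mathbb Z$-linear combination of values $\lambda_f(n)$ in which each $n=\prod_{p:\,n_p>0}p^{\ell_p}$ has exponents $\ell_p\in\{\mu n_p,\mu n_p-2\}\subset\mathbb Z_{\ge1}$; in particular every such $n$ exceeds $1$, while the number of terms and the size of each $n$ depend only on $\mathcal P,\mathbf n,\mu$. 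Inserting \eqref{P} and using $\delta_{1,n}=0$ for $n>1$ yields $\sum_f^{\prime}\phi_{\mathbf n}(\Theta_f)\ll_{\mathcal P,\mathbf n,\mu,k}E(q^m)$, which tends to $0$ as $q^m\to\infty$ by \eqref{E1} (when $q\to\infty$) or \eqref{E2} (when $m\to\infty$); on the other hand $\int_{\mathcal T_{\mathcal P}}\phi_{\mathbf n}\,d^{*}t=0$ because some factor $\int_{\mathcal T}(t_p^{n_p}+t_p^{-n_p})\,d^{*}t_p$ vanishes. For $\mathbf n=0$ one has $\phi_0\equiv 2^{|\mathcal P|}$, so $\sum_f^{\prime}\phi_0(\Theta_f)=2^{|\mathcal P|}(1+E(q^m))\to 2^{|\mathcal P|}=\int_{\mathcal T_{\mathcal P}}\phi_0\,d^{*}t$. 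Together with the density reduction of the previous paragraph, this gives $\sum_f^{\prime}H(\Theta_f)\to\int_{\mathcal T_{\mathcal P}}H\,d^{*}t_{\mathcal P}$ for every inversion-invariant continuous $H$.

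Finally, for the uniformity in $|x|\le R$ I would invoke Arzel\`a--Ascoli: on the compact group $\mathcal T_{\mathcal P}$ the map $t_{\mathcal P}\mapsto\Phi_{\sigma,\tau,\mathcal P}(t_{\mathcal P},t_{\mathcal P}^{-1})$ is Lipschitz and bounded, say by $B=B_{\sigma,\mathcal P}$ (independently of $\tau$), so the family $\{\psi_x\circ\Phi_{\sigma,\tau,\mathcal P}(\cdot,\cdot^{-1}):|x|\le R\}$ is uniformly bounded by $e^{RB}$ and equicontinuous, hence relatively compact in $C(\mathcal T_{\mathcal P})$; since the functionals $\Psi\mapsto\sum_f^{\prime}\Psi(\Theta_f)$ have uniformly bounded norms and converge pointwise to $\Psi\mapsto\int_{\mathcal T_{\mathcal P}}\Psi\,d^{*}t_{\mathcal P}$ on this compact family, the convergence is uniform on it, which is exactly the asserted uniformity. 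The step I expect to be the main obstacle is precisely the passage to the symmetrized monomials: an individual character $\prod_p\alpha_f^{\mu}(p)^{n_p}$ is \emph{not} a polynomial in the Hecke eigenvalues --- it involves $\sqrt{4-\lambda_f(p)^2}$ --- so \eqref{P} cannot be applied to it directly, and only the inversion-symmetric combinations are expressible through the $\lambda_f(p^{\ell})$; that $H$ is inversion-symmetric is exactly what makes this suffice, and it holds only because $\log L_{\mathcal P}(\mathrm{Sym}_f^{\mu},\mathrm{Sym}_f^{\nu},s)$ collapses --- thanks to $\mu-\nu=2$ --- to the single pair of factors carrying $\alpha_f^{\mu}(p)$ and $\beta_f^{\mu}(p)=\alpha_f^{\mu}(p)^{-1}$, as displayed in \eqref{g_sigma}.
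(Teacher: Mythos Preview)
Your argument is correct and rests on the same arithmetic kernel as the paper's: rewriting $\alpha_f^{\mu}(p)^{r}+\beta_f^{\mu}(p)^{r}=\lambda_f(p^{\mu r})-\lambda_f(p^{\mu r-2})$ for $r\ge 1$ (here $\mu\ge 3$) and then killing all non-constant contributions via \eqref{P}. The packaging, however, is genuinely different. The paper approximates each factor $\psi_x\circ g_{\sigma,p}$ by a truncated Taylor polynomial $\sum_{m_p\le M_p}c_{m_p}t_p^{m_p}$ (uniformly on $\mathcal{T}$ and on $|x|\le R$), multiplies out the product, pairs off the cross-terms $c_{m_p}c_{n_p}\alpha_f^{\mu m_p}\beta_f^{\mu n_p}$ into the symmetric combinations $e^{\mu i r_p\theta_f}+e^{-\mu i r_p\theta_f}$, and then compares with the torus integral computed directly as $\prod_p\sum_{m_p}c_{m_p}^2$. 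Your route replaces this explicit truncation by Stone--Weierstrass on the space of coordinate-wise inversion-invariant continuous functions, so you need only treat the symmetrized monomials $\prod_p(t_p^{n_p}+t_p^{-n_p})$, and you upgrade to uniformity in $x$ by Arzel\`a--Ascoli (relatively compact family $\{H_x:|x|\le R\}$) plus the uniform bound $\sum_f^{\prime}1\ll 1$ on the functionals. Both arguments prove exactly the same equidistribution of $(\alpha_f^{\mu}(p))_{p\in\mathcal{P}}$; the paper's is more self-contained and makes the dependence of the error on $\varepsilon$, $R$, $\mathcal{P}$, $M_{\mathcal{P}}$ visible, while yours is shorter and isolates cleanly why the inversion symmetry (and hence the hypothesis $\mu-\nu=2$) is the point on which everything hinges. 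One small cosmetic point: the bound $e^{RB}$ is correct since $\Phi_{\sigma,\tau,\mathcal{P}}(t_{\mathcal{P}},t_{\mathcal{P}}^{-1})$ is complex for $\tau\neq 0$ and $|\psi_x(w)|=e^{-x\,\Im w}$; in the case $\tau=0$ that the key lemma actually uses downstream one simply has $|\psi_x|\equiv 1$.
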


\begin{proof}
Let $1>\varepsilon'>0$.    Considering the Taylor expansion we find that
there exist an $M_{p}=M_p(\varepsilon',R)\in\mathbb{N}$
and $d_{m_{p}}\in\mathbb{C}$ $(0\leq m_{p} \leq M_{p})$ such that
$\psi_x\circ g_{\sigma, p}$ can be approximated by
a polynomial as
\begin{equation}\label{app-uni}
\bigg|
\psi_x \circ g_{\sigma, p}(t_{p})
- 
\sum_{m_{p}=0}^{M_{p}} d_{m_{p}} t_{p}^{m_{p}}
\bigg|
<{\varepsilon'},
\end{equation}
uniformly on $\mathcal{T}$ with respect to $t_p$ and also on $|x|\leq R$ with respect
to $x$.
Replacing $t_p$ by $t_p p^{-i\tau}$, we have
\[
\bigg|
\psi_x \circ g_{\sigma, p}(t_{p} p^{-i\tau})
- 
\sum_{m_{p=0}}^{M_{p}} c_{m_{p}} t_{p}^{m_{p}}
\bigg|
<{\varepsilon'},
\]
where $c_{m_{p}}=d_{m_{p}} p^{-i\tau m_{p}}$.
Write
\[
\varPsi_{\sigma, \tau, p}(t_p;M_p)=
\sum_{m_{p=0}}^{M_{p}} c_{m_p} t_p^{m_p}
\]
and define
\[
\Psi_{\sigma, \tau, \mathcal{P}}(t_{\mathcal{P}},  t'_{\mathcal{P}}
;M_{\mathcal{P}})=
\prod_{p \in \mathcal{P}}
\varPsi_{\sigma, \tau, p}(t_p;M_p)\varPsi_{\sigma, \tau, p}
({t'}_p;M_p),
\]
where $M_{\mathcal{P}}=(M_p)_{p\in\mathcal{P}}$.

Let $\varepsilon''>0$.
Choosing $\varepsilon'$ (depending
on $|\mathcal{P}|$ and $\varepsilon''$) sufficiently small, we obtain 
\begin{equation}\label{app}
|
\psi_x \circ \Phi_{\sigma, \tau, \mathcal{P}}(t_{\mathcal{P}}, t'_{\mathcal{P}})
-
\Psi_{\sigma, \tau, \mathcal{P}}(t_{\mathcal{P}}, t'_{\mathcal{P}};M_{\mathcal{P}})| 
< \varepsilon'',
\end{equation}
again uniformly on $\mathcal{T}$ with respect to $t_p$ and also on $|x|\leq R$ 
with respect to $x$.
In fact, since
\begin{align*}
\lefteqn{\psi_x \circ \Phi_{\sigma, \tau, \mathcal{P}}(t_{\mathcal{P}}, 
t'_{\mathcal{P}})}\\
&=\prod_{p\in\mathcal{P}}\psi_x(g_{\sigma,p}(t_p p^{-i\tau}))
\psi_x(g_{\sigma,p}(t_p^{\prime}p^{-i\tau}))\\
&=\prod_{p\in\mathcal{P}}(\Psi_{\sigma,\tau,p}(t_p;M_p)
+O(\varepsilon'))(\Psi_{\sigma,\tau,p}(t_p^{\prime};M_p)
+O(\varepsilon'))\\
&=\Psi_{\sigma,\tau,\mathcal{P}}(t_{\mathcal{P}},t_{\mathcal{P}}^{\prime}
;M_{\mathcal{P}})
+({\rm remainder \;\;terms}),
\end{align*}
we obtain \eqref{app}.
%
%

The first step of the proof of the lemma is to
express the average of the value of $\psi_x\circ \Phi_{\sigma, \tau, \mathcal{P}}$ 
by using $\Psi_{\sigma, \tau, \mathcal{P}}$.   Let $\varepsilon>0$.
From \eqref{app} with $t_{\mathcal{P}}=\alpha_f^{\mu}(\mathcal{P})$, 
$t_{\mathcal{P}}^{\prime}=\beta_f^{\mu}(\mathcal{P})$, 
$\varepsilon''=\varepsilon/2$
and \eqref{P1}, we have 
\begin{align}\label{psiPhi-Psi}
&
\left|
\sum_{f\in S_k(q^m)}^{\qquad\prime}
\psi_x \circ \Phi_{\sigma, \tau, \mathcal{P}}(\alpha_f^{\mu}(\mathcal{P}), \beta_f^{\mu}(\mathcal{P}))
-
\sum_{f\in S_k(q^m)}^{\qquad\prime}
\Psi_{\sigma, \tau, \mathcal{P}}(\alpha_f^{\mu}(\mathcal{P}), \beta_f^{\mu}(\mathcal{P})
;M_{\mathcal{P}})
\right|
\nonumber\\
< &
\sum_{f\in S_k(q^m)}^{\qquad\prime}
\varepsilon''
=\frac{\varepsilon}{2}(1 +O(E(q^m))).
\end{align}
Therefore, if $q^m$ is sufficiently large, from \eqref{E2}
we see that
\begin{align}\label{step-1}
\left|
\sum_{f \in S(q^m)}^{\qquad\prime} 
\psi_x \circ \Phi_{\sigma, \tau, \mathcal{P}}(\alpha_f^{\mu}(\mathcal{P}), \beta_f^{\mu}(\mathcal{P}))
-
\sum_{f\in S(q^m)}^{\qquad\prime} \Psi_{\sigma, \tau, \mathcal{P}}(\alpha_f^{\mu}(\mathcal{P}), \beta_f^{\mu}(\mathcal{P});M_{\mathcal{P}})
\right|<\varepsilon.
\end{align}

%
%

As the second step, we calculate
$\Psi_{\sigma, \tau, \mathcal{P}}$ as follows;
\begin{align*}
&
\Psi_{\sigma, \tau, \mathcal{P}}(\alpha_f^{\mu}(\mathcal{P}), \beta_f^{\mu}(\mathcal{P})
;M_{\mathcal{P}})
\\
=&
\prod_{ p\in \mathcal{P}}
\Big(\sum_{m_{p}=0}^{M_{p}}c_{m_{p}}e^{\mu im_{p}\theta_f(p)}\Big)
\Big(\sum_{n_{p}=0}^{M_{p}}c_{n_{p}}e^{-\mu in_{p}\theta_f(p)}\Big)
\\
=&
\prod_{p\in \mathcal{P}}
\Big(
\sum_{m_{p}=0}^{M_{p}}c_{m_{p}}^2
\\
&
+
\sum_{m_{p}=0}^{M_{p}}
\sum_{\substack{n_{p}=0\\m_{p}<n_{p}}}^{M_{p}}
c_{m_{p}}e^{\mu im_{p}\theta_f(p)}
c_{n_{p}}e^{-\mu in_{p}\theta_f(p)}
\\
&+
\sum_{m_{p}=0}^{M_{p}}
\sum_{\substack{n_{p}=0\\m_{p}> n_{p}}}^{M_p}
c_{m_{p}}e^{\mu im_{p}\theta_f(p)}
c_{n_{p}}e^{-\mu in_{p}\theta_f(p)}
\Big)
\\
=&
\prod_{p \in \mathcal{P}}
\Big(
\sum_{m_{p}=0}^{M_{p}}c_{m_{p}}^2
\\
&+
\sum_{m_{p}=0}^{M_{p}}
\sum_{\substack{n_{p}=0 \\ m_{p}<n_{p}}}^{M_{p}}
c_{m_{p}}c_{n_{p}}
\big(e^{\mu i(m_{p}-n_{p})\theta_f(p)}+
e^{\mu i(n_{p}-m_{p})\theta_f(p)}
\big)
\Big).
\end{align*}
We put $n_{p}-m_{p}=r_{p}$.
Using \eqref{euler}, we see that
\begin{align*}
&
\Psi_{\sigma, \tau, \mathcal{P}}(\alpha_f^{\mu}(\mathcal{P}), \beta_f^{\mu}(\mathcal{P})
;M_{\mathcal{P}})
\\
=&
\prod_{ p \in \mathcal{P}}
\bigg(
\sum_{m_{p}=0}^{M_{p}}c_{m_{p}}^2
+
\sum_{r_{p}=1}^{M_{p}}
\sum_{n_{p}=r_{p}}^{M_p}
c_{n_{p}-r_{p}}c_{n_{p}}
\big(
e^{\mu ir_{p}\theta_f(p)}
+
e^{-\mu ir_{p}\theta_f(p)}
\big)
\bigg)
\\
=&
\prod_{p \in \mathcal{P}}
\Bigg(
\sum_{m_{p}=0}^{M_{p}}c_{m_{p}}^2
+
\sum_{r_{p}=1}^{M_{p}}
\sum_{n_{p}=r_{p}}^{M_p}
c_{n_{p}-r_{p}}c_{n_{p}}
\Big((e^{\mu ir_{p}\theta_f(p)}
\\&
+
\sum_{\ell=1}^{\mu r_{p}-1}e^{i( \mu r_{p}-2\ell)\theta_f(p)}
+
e^{- \mu ir_{p}\theta_f(p)})
-
\sum_{\ell=1}^{\mu r_{p}-1}e^{i(\mu r_{p}-2\ell)\theta_f(p)}
\Big)
\Bigg)
\\
=&
\prod_{p\in \mathcal{P}}
\Bigg(
\sum_{m_{p}=0}^{M_{p}}c_{m_{p}}^2
+
\sum_{r_{p}=1}^{M_{p}}
\sum_{n_{p}=r_{p}}^{M_p}
c_{n_{p}-r_{p}}c_{n_{p}}
(
\lambda_f( p^{\mu r_{p}})
-
\lambda_f( p^{\mu r_{p}-2})
)
\Bigg).
\end{align*}
Since $\mu = \nu+2 \geq 3$, by using \eqref{P}, we obtain
\begin{align*}
&
\sum_{f\in S(q^m)}^{\qquad\prime} \Psi_{\sigma, \tau, \mathcal{P}}
(\alpha_f^{\mu}(\mathcal{P}), \beta_f^{\mu}(\mathcal{P});M_{\mathcal{P}})
\\
=
&
\sum_{f\in S_k(q^m)}^{\qquad\prime}
\prod_{ p\in \mathcal{P}}
\bigg(
\sum_{m_{p}=0}^{M_{p}}c_{m_{p}}^2
+
\sum_{r_{p}=1}^{M_{p}}
\sum_{n_{p}=r_{p}}^{M_p}
c_{n_{p}-r_{p}}c_{n_{p}}
(
\lambda_f( p^{\mu r_{p}})
-
\lambda_f(p^{\mu r_{p}-2})
)
\bigg)
\\
=&
\prod_{p \in \mathcal{P}}
\sum_{m_{p}=0}^{M_{p}}c_{m_{p}}^2
+ O(E(q^m)),
\end{align*}
where the implied constant of the error term depends on 
$\mathcal{P}$, $\mu$ and $M_{\mathcal{P}}=M_{\mathcal{P}}(\varepsilon',R)$
(hence depends on $\varepsilon$ under the above choice of $\varepsilon'$).
But still, 
this error term can be smaller than $\varepsilon$ for sufficiently large $q^m$.
Combining this with \eqref{step-1}, we obtain
\begin{align}\label{step-2}
\left|
\sum_{f \in S(q^m)}^{\qquad\prime} 
\psi_x \circ \Phi_{\sigma, \tau, \mathcal{P}}(\alpha_f(\mathcal{P})^{\mu}, \beta_f(\mathcal{P})^{\mu})
-
\prod_{p \in \mathcal{P}}
\sum_{m_{p}=0}^{M_{p}}c_{m_{p}}^2
\right|<2\varepsilon.
\end{align}


As the final step, we calculate the integral 
in the statement of Lemma~\ref{key}.
For any $\varepsilon>0$, using \eqref{app}, we have
\begin{align}\label{step-3}
&
\int_{\mathcal{T}_{\mathcal{P}}} 
\psi_x(
\Phi_{\sigma, \tau, \mathcal{P}}( t_{\mathcal{P}},  t_{\mathcal{P}}^{-1})
)
d^* t_{\mathcal{P}} 
\nonumber\\
=&
\int_{\mathcal{T}_{\mathcal{P}}} 
(
\psi_x \left(
\Phi_{\sigma, \tau, \mathcal{P}}( t_{\mathcal{P}},  t_{\mathcal{P}}^{-1})
\right)
-
\Psi_{\sigma, \tau, \mathcal{P}}( t_{\mathcal{P}},  t_{\mathcal{P}}^{-1}
;M_{\mathcal{P}})
+
\Psi_{\sigma, \tau, \mathcal{P}}( t_{\mathcal{P}},  t_{\mathcal{P}}^{-1}
;M_{\mathcal{P}})
)
d^* t_{\mathcal{P}} 
\nonumber\\
=&
\int_{\mathcal{T}_{\mathcal{P}}} 
\Psi_{\sigma, \tau, \mathcal{P}}( t_{\mathcal{P}},  t_{\mathcal{P}}^{-1}
;M_{\mathcal{P}})
d^* t_{\mathcal{P}} 
+O(\varepsilon)
\nonumber\\
=&
\int_{\mathcal{T}_{\mathcal{P}}}
\prod_{ p\in \mathcal{P}}
\bigg(
\sum_{m_{p}=0}^{M_{p}}c_{m_{p}}t_{p}^{m_{p}}
\bigg)
\bigg(
\sum_{n_{p}=0}^{M_{p}}c_{n_{p}}{t_{p}}^{-n_{p}}
\bigg)
d^* t_{\mathcal{P}} 
+O(\varepsilon)
\nonumber\\
=&
\prod_{ p\in \mathcal{P}}
\sum_{m_{p}=0}^{M_{p}}c_{m_{p}}^2
+O(\varepsilon).
\end{align}
From \eqref{step-2} and \eqref{step-3} we find that the identity in the statement 
of Lemma~\ref{key}
holds with the error $O(\varepsilon)$,
but this error can be arbitrarily small,
so the assertion of Lemma~\ref{key} follows.
\end{proof}

\begin{rem}\label{reason}
In the above proof, the function 
$\Psi_{\sigma, \tau, \mathcal{P}}(\alpha_f^{\mu}(\mathcal{P}), \beta_f^{\mu}(\mathcal{P})
;M_{\mathcal{P}})$ 
is expressed by 
\[
e^{\mu im_p\theta_f(p)}e^{-\mu in_p\theta_f(p)},
\qquad (m_p, n_p \geq 0,\; \mu \geq 3).
\]
When $m_p \neq n_p$ these are written by
$\lambda_f(p^{\mu r_p})$
and
$\lambda_f(p^{\mu r_p-2})$
$(r_p \geq 1)$
and, as shown above, they are included in the error terms by \eqref{P}.
If we try to study averages of $\log L_{\mathbb{P}(q)}(\mathrm{Sym}_f^{\gamma}, s)$ 
itself (without considering the difference)
by the same method as in this paper, 
we have to handle the terms of the form
\[
e^{im_p\theta_f(p)}e^{-in_p\theta_f(p)}\qquad (m_p-n_p=\pm 2). 
\]
However, these terms produce other ``main'' terms by \eqref{P1}, since 
$\alpha_f^2(p)+\beta_f^2(p)=\lambda_f(p^2)-1$.
This invalidates the above argument, so
our method, as it is, cannot be applied to $\log L(\mathrm{Sym}_f^{\gamma}, s)$.
\end{rem}

When $\tau=0$, 
Proposition~\ref{M_P} and Lemma~\ref{key} imply
\begin{align}\label{key-tau=0}
&
\mathrm{avg}_{\mathrm{prime}} (\psi_x \circ \Phi_{\sigma, 0, \mathcal{P}})
=
\mathrm{avg}_{\mathrm{power}} (\psi_x \circ \Phi_{\sigma, 0, \mathcal{P}})
\nonumber\\
=&
\int_{\mathcal{T}_{\mathcal{P}}}
\psi_x (
\Phi_{\sigma, 0, \mathcal{P}}(t_{\mathcal{P}}, t_{\mathcal{P}}^{-1})
)
d^* t_{\mathcal{P}}
\nonumber\\
=&
\int_{\mathcal{T}_{\mathcal{P}}}
\psi_x \big(
\sum_{p\in\mathcal{P}}
(
g_{\sigma, p}(t_p)+g_{\sigma, p}(t_p^{-1})
)
\big)
d^* t_{\mathcal{P}}
\nonumber\\
=&
\int_{\mathcal{T}_{\mathcal{P}}}
\psi_x (
g_{\sigma, \mathcal{P}}(t_{\mathcal{P}})
+\overline{g_{\sigma,\mathcal{P}}(t_{\mathcal{P}})}
)
d^* t_{\mathcal{P}}
\nonumber\\
=&
\int_{\mathcal{T}_{\mathcal{P}}}
\psi_x (2\Re(g_{\sigma, \mathcal{P}}(t_{\mathcal{P}}))
d^* t_{\mathcal{P}}
=
\int_{\mathbb{R}}
{\mathcal{M}}_{\sigma, \mathcal{P}}(u)\psi_x (u)\frac{du}{\sqrt{2\pi}},
\end{align}
uniformly in $|x|\leq R$.
This fact deduces the case $\sigma >1$ of the following key lemma.
%
%
\begin{lem}\label{keylemma}
Let $\mu > \nu \geq 1$ be integers with $\mu-\nu=2$.
Suppose Assumption~\ref{ass} and \ref{grh}.
In the case  $2\leq k <12$ or $k=14$, for $\sigma>1/2$ and
$\psi_x(u)=\exp (ixu)$, 
we have
\begin{align*}
&
\mathrm{Avg}_{\mathrm{prime}} \psi_x (\log L_{\mathbb{P}(q)}(\mathrm{Sym}_f^{\mu}, \sigma)-\log L_{\mathbb{P}(q)}(\mathrm{Sym}_f^{\nu}, \sigma))
\\
=&
\mathrm{Avg}_{\mathrm{power}} \psi_x (\log L_{\mathbb{P}(q)}(\mathrm{Sym}_f^{\mu}, \sigma)-\log L_{\mathbb{P}(q)}(\mathrm{Sym}_f^{\nu}, \sigma))
\\
=&
\int_{\mathbb{R}}
{\mathcal{M}}_{\sigma}(u)
\psi_x(u)\frac{du}{\sqrt{2\pi}}
\end{align*}
The above convergence is uniform in $|x|\leq R$ for any $R>0$.
\end{lem}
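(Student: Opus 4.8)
The plan is to establish Lemma~\ref{keylemma} by transferring the finite-set statement of Lemma~\ref{key} to the full partial Euler product $L_{\mathbb{P}(q)}$, separately in the two ranges $\sigma>1$ and $1/2<\sigma\leq 1$. First I would treat the easy range $\sigma>1$. Here $\tau=0$ and \eqref{key-tau=0} already gives, for each finite $\mathcal{P}\subset\mathbb{P}(q)$,
\[
\mathrm{avg}_{\mathrm{prime}}(\psi_x\circ\Phi_{\sigma,0,\mathcal{P}})
=\mathrm{avg}_{\mathrm{power}}(\psi_x\circ\Phi_{\sigma,0,\mathcal{P}})
=\int_{\mathbb{R}}{\mathcal{M}}_{\sigma,\mathcal{P}}(u)\psi_x(u)\frac{du}{\sqrt{2\pi}},
\]
uniformly in $|x|\leq R$. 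Taking $\mathcal{P}=\mathcal{P}_y$ and letting $y\to\infty$, the right-hand side converges to $\int_{\mathbb{R}}{\mathcal{M}}_{\sigma}(u)\psi_x(u)\,du/\sqrt{2\pi}$ by Proposition~\ref{M} (uniform convergence of ${\mathcal{M}}_{\sigma,\mathcal{P}_y}$ to ${\mathcal{M}}_\sigma$, both with mass $1$). On the left, since $\sigma>1$ the product $L_{\mathcal{P}_y}(\mathrm{Sym}_f^{\mu},\mathrm{Sym}_f^{\nu},\sigma)$ converges to $L_{\mathbb{P}(q)}(\mathrm{Sym}_f^{\mu},\mathrm{Sym}_f^{\nu},\sigma)$ as $y\to\infty$, uniformly in $f$, because $|\alpha_f(p)|=|\beta_f(p)|=1$ gives an absolutely and uniformly convergent tail bounded by $\sum_{p>y}p^{-\sigma}$; hence $\psi_x(\log L_{\mathcal{P}_y}(\cdots))\to\psi_x(\log L_{\mathbb{P}(q)}(\cdots))$ uniformly in $f$, and the limit in $q$ (or $m$) defining the averages commutes with the limit in $y$. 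This gives the claim for $\sigma>1$.

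For the range $1/2<\sigma\leq 1$, the Euler product no longer converges, so I would use the approximation announced as Lemma~\ref{appSymL} (Section~5): under Assumptions~\ref{ass} and~\ref{grh}, with a finite set $\mathcal{P}=\mathcal{P}(q^m)$ growing with $q^m$ (with the restriction $q\geq Q(\mu)$ when $\sigma\leq 1$), one has
\[
\log L_{\mathbb{P}(q)}(\mathrm{Sym}_f^{\mu},\mathrm{Sym}_f^{\nu},\sigma)
=\Phi_{\sigma,0,\mathcal{P}(q^m)}(\alpha_f^{\mu}(\mathcal{P}),\beta_f^{\mu}(\mathcal{P}))+(\text{small error})
\]
in a suitable averaged sense. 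I would combine this with Lemma~\ref{key}: for a fixed finite $\mathcal{P}_y$, Lemma~\ref{key} evaluates $\mathrm{avg}_{\mathrm{prime}}(\psi_x\circ\Phi_{\sigma,0,\mathcal{P}_y})$ and $\mathrm{avg}_{\mathrm{power}}(\psi_x\circ\Phi_{\sigma,0,\mathcal{P}_y})$ as $\int_{\mathbb{R}}{\mathcal{M}}_{\sigma,\mathcal{P}_y}(u)\psi_x(u)\,du/\sqrt{2\pi}$. A three-term estimate then closes the argument: bound the difference between $\psi_x(\log L_{\mathbb{P}(q)}(\cdots))$ averaged and $\psi_x\circ\Phi_{\sigma,0,\mathcal{P}_y}$ averaged using Lemma~\ref{appSymL} plus \eqref{P1} and the Lipschitz bound $|\psi_x(a)-\psi_x(b)|\leq |x|\,|a-b|$ (valid uniformly for $|x|\leq R$); apply Lemma~\ref{key}; and finally let $y\to\infty$ using Proposition~\ref{M}. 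One must choose $y=y(q^m)$ tending to infinity slowly enough that the error terms from Lemma~\ref{key} (which depend on $\mathcal{P}_y$) remain negligible against the main limit, and fast enough that the approximation error from Lemma~\ref{appSymL} is controlled — a standard diagonalization.

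The main obstacle is the $1/2<\sigma\leq 1$ case, specifically making the interchange of limits rigorous: the average (limit in $q$ for $\mathrm{Avg}_{\mathrm{prime}}$, or in $m$ for $\mathrm{Avg}_{\mathrm{power}}$) must commute with $y\to\infty$, and the size of the finite set $\mathcal{P}_y$ in Lemma~\ref{key} affects the implied constants in its error term (which depend on $|\mathcal{P}|$, $\mu$ and $M_{\mathcal{P}}$). So the delicate point is to quantify, via Lemma~\ref{appSymL}, how large $q^m$ must be relative to $y$ for the tail $\log L_{\mathbb{P}(q)}-\log L_{\mathcal{P}_y}$ to be small on average, and then to verify this is compatible with the error bounds in Lemma~\ref{key}; the restriction $q\geq Q(\mu)$ enters here to guarantee convergence of the relevant auxiliary series in the sub-critical strip. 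Everything else — the $\sigma>1$ case, the passage from $\psi_x$-averages to ${\mathcal{M}}_\sigma$, and the uniformity in $|x|\leq R$ — follows routinely from the already-established Propositions~\ref{M_P}, \ref{tildeM}, \ref{M} and Lemma~\ref{key}.
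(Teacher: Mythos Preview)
Your treatment of the case $\sigma>1$ matches the paper's proof: both use the three-term splitting with the Lipschitz bound \eqref{ihara}, the finite-$\mathcal{P}$ identity \eqref{key-tau=0}, and Proposition~\ref{tildeM}.

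The gap is in the range $1/2<\sigma\leq 1$. Your plan is to invoke Lemma~\ref{key} for a finite $\mathcal{P}_y$ and then diagonalize $y=y(q^m)$. But Lemma~\ref{key} is proved only for a \emph{fixed} finite $\mathcal{P}$: its error term $O(E(q^m))$ carries an implied constant depending on $\mathcal{P}$, $\mu$, and $M_{\mathcal{P}}(\varepsilon',R)$, with no explicit control in $|\mathcal{P}|$. So you cannot verify that the error stays small when $\mathcal{P}=\mathcal{P}_{y(q^m)}$ grows with $q^m$; the phrase ``standard diagonalization'' hides precisely the estimate that is missing. Moreover, Lemma~\ref{appSymL} is stated specifically for $\mathcal{P}_{\log q^m}$, so the two cutoffs cannot be chosen independently.

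The paper resolves this not by appealing to Lemma~\ref{key} but by a direct quantitative computation for the moving set $\mathcal{P}_{\log q^m}$. It expands $\psi_x(g_{\sigma,p}(t_p))=\sum_{a\geq 0}G_a(p,x)t_p^a$ explicitly, writes the difference $\mathcal{Y}_{\log q^m}$ between the average and $\widetilde{\mathcal{M}}_{\sigma,\mathcal{P}_{\log q^m}}$ in terms of the $G_a(p,x)$ (see \eqref{daiji}, \eqref{daiji2}), and then bounds the off-diagonal contribution using the inequalities \eqref{coeff1}, \eqref{coeff1.5}, \eqref{cal-G} together with a carefully chosen splitting parameter $M$ depending on $k$, $\mu$, and $\mathcal{P}_{\log q^m}$. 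This yields an explicit bound (cf.~\eqref{Y_*}--\eqref{Y2}) in which the growth of $|\mathcal{P}_{\log q^m}|=L$ is visible and can be beaten by $E(q^m)$ and by the choice of $M$. This explicit, non-black-box estimate is the substantive content of Section~6 and is the missing idea in your outline.
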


We note that this lemma is actually a special case $\Psi=\psi_x$ in our main
Theorem~\ref{main}.    To show this lemma is the main body of the proof of Theorem~\ref{main}.

\begin{proof}[Proof in the case $\sigma >1$]
Since $\sigma >1$, we find a sufficiently large finite subset 
$\mathcal{P} \subset \mathbb{P}(q)$
for which it holds that
$$
|L_{\mathbb{P}(q)}(\mathrm{Sym}_f^{\mu}, \mathrm{Sym}_f^{\nu},s)
-
L_{\mathcal{P}}(\mathrm{Sym}_f^{\mu}, \mathrm{Sym}_f^{\nu},s)
|<\varepsilon
$$
and
$
|\widetilde{\mathcal{M}}_{\sigma, \mathcal{P}}(x)-\widetilde{\mathcal{M}}_{\sigma}(x)|<\varepsilon
$
for any $x\in\mathbb{R}$ and any $\varepsilon>0$.
The last inequality is provided by Proposition~\ref{tildeM}.
We can choose the above $\mathcal{P}$ which does not depend on $q^m$.
Using this $\mathcal{P}$, we have
\begin{align*}
&
\bigg|\sum_{f\in S_k(q^m)}^{\qquad\prime}
\psi_x (\log L_{\mathbb{P}(q)}(\mathrm{Sym}_f^{\mu}, \mathrm{Sym}_f^{\nu}, \sigma))
-
\int_{\mathbb{R}}{\mathcal{M}}_{\sigma}(u)\psi_x(u)\frac{du}{\sqrt{2\pi}}
\bigg|
\\
\leq &
\bigg|\sum_{f\in S_k(q^m)}^{\qquad\prime}
\bigg(\psi_x (\log L_{\mathbb{P}(q)}(\mathrm{Sym}_f^{\mu}, \mathrm{Sym}_f^{\nu}, \sigma))
-
\psi_x (\log L_{\mathcal{P}}(\mathrm{Sym}_f^{\mu}, \mathrm{Sym}_f^{\nu}, \sigma))
\bigg)\bigg|
\\
&+
\bigg|\sum_{f\in S_k(q^m)}^{\qquad\prime}
\psi_x (\log L_{\mathcal{P}}(\mathrm{Sym}_f^{\mu}, \mathrm{Sym}_f^{\nu}, \sigma))
-
\int_{\mathbb{R}}{\mathcal{M}}_{\sigma, \mathcal{P}}(u)\psi_x(u)\frac{du}{\sqrt{2\pi}}
\bigg|
\\
&+
\bigg|
\int_{\mathbb{R}}{\mathcal{M}}_{\sigma, \mathcal{P}}(u)\psi_x(u)\frac{du}{\sqrt{2\pi}}
-
\int_{\mathbb{R}}{\mathcal{M}}_{\sigma}(u)\psi_x(u)\frac{du}{\sqrt{2\pi}}
\bigg|
\\
&
=S_1+S_2+S_3,
\end{align*}
say.   We remind the relation 
\begin{equation}\label{ihara}
|\psi_x(u)-\psi_x(u')|
\ll |x|\cdot |u-u'|
\end{equation}
for $u\in \mathbb{R}$ (see Ihara~\cite{ihara} (6.5.19) or Ihara-Matsumoto~\cite{i-1stAuthor}).
We see that
\begin{align*}
S_1 \ll \; & |x| 
\sum_{f\in S_k(q^m)}^{\qquad\prime}
\bigg(
|\log L_{\mathbb{P}(q)}(\mathrm{Sym}_f^{\mu}, \mathrm{Sym}_f^{\nu}, \sigma)
-
\log L_{\mathcal{P}}(\mathrm{Sym}_f^{\mu}, \mathrm{Sym}_f^{\nu}, \sigma)|
\bigg)
\end{align*}
and
\[
S_3
=
\left|
\widetilde{\mathcal{M}}_{\sigma,\mathcal{P}}(x)
-
\widetilde{\mathcal{M}}_{\sigma}(x)
\right|.
\]
Therefore $|S_1|$ and $|S_3|$ are $O(\varepsilon)$
for large $|\mathcal{P}|$, with the implied constant depending on $R$.
As for the estimate on $|S_2|$, we use \eqref{key-tau=0}, whose convergence is
uniform on $|x|\leq R$.   This completes the proof.

%
\end{proof}

In the next two sections we will give the proof of Lemma~\ref{keylemma}
when $1 \geq \sigma > 1/2$. 

%
%
\section{The approximation of $L_{\mathbb{P}(q)}$ under GRH.}

In this section, we suppose Assumptions~\ref{ass} and \ref{grh}.
This section is the first step of the proof of Lemma~\ref{keylemma}
for $1 \geq \sigma > 1/2$.  

In this section, 
we study the approximation of
$L_{\mathbb{P}(q)}(\mathrm{Sym}_f^{\gamma}, s)$ by
$L_{\mathcal{P}}(\mathrm{Sym}_f^{\gamma}, s)$ 
with suitable $\mathcal{P}$ which depends on the
level of the primitive form $f$
(see Lemma~\ref{appSymL} below). 
Recall that the level of $f$ is $q^m$ and $q$ is a prime number.

Let the sets $\mathcal{P}_{\log q^m}$ and $\mathcal{P}_{\log q^m}^+$ 
be defined by
\[
\mathcal{P}_{\log q^m}= \{ p \in \mathbb{P}(q) \mid  p \leq \log q^m\}
\]
and
\[
\mathcal{P}_{\log q^m}^+= \mathcal{P}_{\log q^m} \cup \{q\}.
\]
Then
\[
\log L_{\mathbb{P}(q)}(\mathrm{Sym}_f^{\gamma}, s)
=\log L(\mathrm{Sym}_f^{\gamma}, s)+\log (1-\lambda_f(q^{\gamma})q^{-s}),
\]
\[
\log L_{\mathcal{P}_{\log q^m}}(\mathrm{Sym}_f^{\gamma}, s)
=\log L_{\mathcal{P}_{\log q^m}^+}(\mathrm{Sym}_f^{\gamma}, s) 
+\log (1-\lambda_f(q^{\gamma})q^{-s})
\]
and
\begin{align*}
&\log L_{\mathbb{P}(q)}(\mathrm{Sym}_f^{\gamma}, s)
- \log L_{\mathcal{P}_{\log q^m}}(\mathrm{Sym}_f^{\gamma}, s)
\\
=&
\log L(\mathrm{Sym}_f^{\gamma}, s)
- \log L_{\mathcal{P}_{\log q^m}^+}(\mathrm{Sym}_f^{\gamma}, s).
\end{align*}
Define
\[
F(\mathrm{Sym}_f^{\gamma}, s)
=
\frac{L_{\mathbb{P}(q)}(\mathrm{Sym}_f^{\gamma}, s)}{L_{\mathcal{P}_{\log q^m}}(\mathrm{Sym}_f^{\gamma}, s)}.
\]
Since
\begin{align*}
&
\log L(\mathrm{Sym}_f^{\gamma}, s)
\\
=&
-\log(1-\lambda_f(q^{\gamma})q^{-s})
-\sum_{ p\neq q}\sum_{h=0}^{\gamma} \log(1-\alpha_f^{\gamma -h}(p) \beta^h(p) p^{-s})
\\
=&
\sum_{\ell=1}^{\infty}
\frac{\lambda_f^{\ell}(q^{\gamma})}{\ell q^{\ell s}}
+\sum_{p \neq q}\sum_{h=0}^{\gamma} \sum_{\ell=1}^{\infty}
\frac{\alpha_f^{(\gamma-h)\ell}(p) \beta^{h\ell}(p)}{\ell p^{\ell s}}
\end{align*}
for $\sigma>1$, 
we can write
\begin{align}\label{tsuika0}
\log F(\mathrm{Sym}_f^n, s)
=
\sum_{ p\in \mathbb{P}(q)\setminus \mathcal{P}_{\log q^m}}
\sum_{\ell=1}^{\infty}\frac{c_{f,\gamma, p}(\ell)}{\ell  p^{\ell s}}
\end{align}
and
\begin{align}\label{tsuika}
\frac{F'}{F}(\mathrm{Sym}_f^{\gamma}, s)
=
-
\sum_{ p \in \mathbb{P}(q)\setminus \mathcal{P}_{\log q^m}}
\sum_{\ell=1}^{\infty}\frac{c_{f, \gamma, p}(\ell)\log  p}{ p^{\ell s}}
\end{align}
for $\sigma >1$, where the coefficients $c_{f,\gamma, p}$  
are defined by 
\begin{align}\label{tsuika2}
c_{f,\gamma, p}(\ell)=
\sum_{h=0}^{\gamma} \alpha_f^{(\gamma -h)\ell}( p)\beta^{h\ell}( p),
\end{align}
for $p \neq q$.
By Assumptions~\ref{ass} and \ref{grh}, the functions
$\log L(\mathrm{Sym}_f^{\gamma}, s)$ are holomorphic for  $\sigma >1/2$.
By the argument of the proof of Lemma~3 in Duke~\cite{d},  
we obtain
\begin{equation}\label{duke}
\left|
\frac{L'(\mathrm{Sym}_f^{\gamma}, s)}{L(\mathrm{Sym}_f^{\gamma}, s)}
\right|
\ll_{\epsilon, \gamma}
\log q^m + \log(1+|t|)
\end{equation}
for $1/2+\epsilon \leq \sigma \leq 2$ 
$(0<\epsilon \leq 1)$
from \eqref{ass-estimate} and \eqref{ass-estimate'}. 

Now we restrict ourselves to the case $\gamma=\mu$, $\nu$.    When $q \geq Q(\mu)$,
we have
\begin{align}\label{tsuika3}
&
\log L_{\mathcal{P}_{\log q^m}^+}(\mathrm{Sym}_f^{\gamma}, s)
+\log (1-\lambda_f(q^{\gamma})q^{-s})
\nonumber\\
=&
-\sum_{\substack{ p\leq \log q^m \\  p\neq q}}\sum_{h=0}^{\gamma}
\log(1-\alpha_f^{\gamma-h}( p)\beta_f^h( p) p^{-s})
\end{align}
for $\sigma>1/2$ and $\gamma=\mu$, $\nu$.
Here, on the second term of the left-hand side of the above equation,
since we know  $\lambda_f(q)<d(q)=2$ 
and 
$|\lambda_f(q^{\gamma})q^{-s}|<2^{\gamma} q^{-\sigma}< 2^{\mu}/\sqrt{q}$
from \eqref{euler}, 
we have $|\lambda_f(q^{\gamma})q^{-s}|<2^{\mu}/\sqrt{Q(\mu)}<1$
and the above logarithm is well-defined.
Differentiating the both sides of \eqref{tsuika3}, we obtain
\begin{align*}
&
\frac{L'_{\mathcal{P}_{\log q^m}^+}(\mathrm{Sym}_f^{\gamma}, s)}{L_{\mathcal{P}_{\log q^m}^+}(\mathrm{Sym}_f^{\gamma}, s)}
\\
=&
-
\frac{\lambda_f(q^{\gamma}) q^{-s} \log q}{1-\lambda_f(q^{\gamma})q^{-s}}
-\sum_{\substack{ p\leq \log q^m\\  p\neq q}} \sum_{h=0}^{\gamma}
\frac{\alpha_f^{\gamma-h}(p)\beta_f^h( p) p^{-s}\log p}{1-\alpha_f^{\gamma-h}(p)\beta_f^h( p) p^{-s}}.
\end{align*}
The denominator $(1-\lambda_f(q^{\gamma})q^{-s})$ 
has a lower bound $1-2^{\mu}/\sqrt{Q(\mu)} >0$ when $q \geq Q(\mu)$.
Therefore the first term of the right-hand side of the above equation
can be estimated by $q^{-1/2}\log q$.
Hence, when $q\geq Q(\mu)$ and $\sigma>1/2$, we have
\begin{align}\label{L'/L}
\left|\frac{L'_{\mathcal{P}_{\log q^m}^+}(\mathrm{Sym}_f^{\gamma}, s)}{L_{\mathcal{P}_{\log q^m}^+}(\mathrm{Sym}_f^{\gamma}, s)}\right|
\ll&
q^{-1/2}\log q
+
\sum_{ p\leq \log q^m}
\frac{ p^{-\sigma}\log p}{1- p^{-\sigma}}
\nonumber\\
\ll&
q^{-1/2}\log q
+
\sum_{ p\leq \log q^m}
\frac{\log p}{ p^{1/2}}
\nonumber\\
\ll&
q^{-1/2}\log q +
(\log q^m)^{1/2} 
\nonumber\\
\ll&
(\log q^m)^{1/2}
\end{align}
by partial summation and the prime number theorem.
From \eqref{duke} and \eqref{L'/L}, we obtain
\begin{equation}\label{F'/F}
\frac{F'}{F} 
(\mathrm{Sym}_f^{\gamma}, s)
\ll \log q^m+\log(1+|t|)\qquad 
(\frac{1}{2}+\varepsilon\leq\sigma\leq 2),
\end{equation}
under assumptions.

Now we assume $1/2<\sigma\leq 1$, and put $\sigma=1/2+\delta$, $0<\delta\leq 1/2$.
The following argument is similar to the proof of Proposition 5 
in Duke~\cite{d}. By Mellin's formula, we know
\[
e^{-y}=\frac{1}{2\pi i}\int_{2-i\infty}^{2+i\infty} y^{-z}\Gamma (z)dz.
\]
Therefore by \eqref{tsuika}, for $y>0$, we have
\begin{align*}
&
-
\sum_{ p\in \mathbb{P}(q)\setminus \mathcal{P}_{\log q^m}}\log p 
\sum_{\ell=1}^{\infty}\frac{c_{f,\gamma, p}(\ell)}{ p^{\ell u}} e^{- p^{\ell}/x}
\\
=&
\frac{1}{2\pi i}\int_{2-\infty}^{2+i\infty}
\frac{F'}{F}(\mathrm{Sym}_f^{\gamma}, u+z) x^z \Gamma (z) dz,
\end{align*}
where $u>0$ and $x>1$.
Now assume $(1+\delta)/2 < u\leq 3/2$.
By shifting the path of integration to $\Re z = (1+\delta)/2-u$
and using \eqref{F'/F}, 
we have
\begin{align*}
&
-\sum_{ p\in \mathbb{P}(q)\setminus \mathcal{P}_{\log q^m}} \log p 
\sum_{\ell=1}^{\infty}\frac{c_{f,\gamma, p}(\ell)}{ p^{\ell u}} e^{- p^{\ell}/x}
\\
=&
\frac{F'}{F}(\mathrm{Sym}_f^{\gamma}, u)
+O(x^{(1+\delta)/2-u} \log q^m).
\end{align*}
Integrating the above equation with respect to $u$ from $\sigma=1/2+\delta$ to $3/2$
we obtain
\begin{align*}
&
 \log F(\mathrm{Sym}_f^{\gamma}, 3/2) - \log F(\mathrm{Sym}_f^{\gamma}, \sigma) 
=
 \int_{\sigma}^{3/2}\frac{F'}{F}(\mathrm{Sym}_f^{\gamma}, u) du
\\
=&
-
\int_{\sigma}^{3/2}
\sum_{ p\in \mathbb{P}(q)\setminus \mathcal{P}_{\log q^m}} \log p 
\sum_{\ell=1}^{\infty} \frac{c_{f,\gamma, p}(\ell)}{ p^{\ell u}} e^{- p^{\ell}/x}
du
\\
&+
O\left(\int_{\sigma}^{3/2} x^{(1+\delta)/2-u} du\cdot \log q^m \right).
\end{align*}
Separating the term corresponding to $\ell=1$ on the 
right-hand side, wee see that
\begin{align}\label{tsuika4}
&
\log F(\mathrm{Sym}_f^{\gamma}, \sigma)  
-
\log F(\mathrm{Sym}_f^{\gamma}, 3/2) 
\nonumber\\
=& 
-
\sum_{ p\in \mathbb{P}(q)\setminus \mathcal{P}_{\log q^m}}
\frac{c_{f,\gamma, p}(1)}{ p^{3/2}} e^{- p/x}
+
\sum_{ p\in \mathbb{P}(q)\setminus \mathcal{P}_{\log q^m}}
\frac{c_{f,\gamma, p}(1)}{ p^{\sigma}} e^{- p/x}
\nonumber\\
&+
O_{\delta}\left(
\sum_{ p\in \mathbb{P}(q)\setminus \mathcal{P}_{\log q^m}} 
\sum_{\ell=2}^{\infty} \frac{1}{\ell p^{\ell \sigma}} e^{- p^{\ell}/x}
+
\frac{x^{1/2 -\sigma+\delta/2} \log q^m}{\log x}
\right),
\end{align}
because from \eqref{tsuika2} we see that $c_{f,\gamma, p}(\ell)=O(1)$.

On the right-hand side of \eqref{tsuika4}, we have
\begin{align*}
&
\sum_{ p\geq \log q^m} 
\sum_{\ell=2}^{\infty} \frac{1}{\ell p^{\ell \sigma}} e^{- p^{\ell}/x}
\\
\ll&
\sum_{\ell=2}^{\infty} \frac{1}{\ell}
\sum_{ p\geq \log q^m} 
\frac{1}{ p^{\ell/2+\ell\delta}} 
<
\sum_{ p\geq \log q^m} 
\frac{1}{ p^{1+2\delta}} 
+
\sum_{\ell=3}^{\infty} \frac{1}{\ell}
\sum_{ p\geq \log q^m} 
\frac{1}{ p^{\ell/2+\ell\delta}} 
\\
\ll &
\frac{1}{(\log q^m)^{\delta}}
+
\sum_{\ell=3}^{\infty} \frac{1}{\ell^2}
\sum_{ p\geq \log q^m} 
\frac{1}{ p^{\ell/3+\ell\delta}} 
\ll
\frac{1}{(\log q^m)^{\delta}}
\end{align*}
and
\[
\sum_{ p\in \mathbb{P}(q)\setminus \mathcal{P}_{\log q^m}}
\frac{c_{f,\gamma, p}(1)}{ p^{3/2}} e^{- p/x}
\ll 
\sum_{ p > \log q^m}
\frac{1}{ p^{3/2}} 
\ll 
\sum_{ n > \log q^m}
\frac{1}{ n^{3/2}} 
\ll
(\log q^m)^{-1/2}.
\]
Moreover, using \eqref{tsuika0} we have
\begin{align*}
&
|\log F(\mathrm{Sym}_f^{\gamma}, 3/2)|
\ll 
\sum_{ p\in \mathbb{P}(q)\setminus\mathcal{P}_{\log q^m}}\sum_{\ell=1}^{\infty}
\frac{1}{\ell p^{3\ell/2}}
\\
\ll &
\sum_{\ell=1}^{\infty}\frac{1}{\ell^2}
\sum_{ p\geq \log q^m}
\frac{1}{ p^{3\ell/2-\ell/4}}
<
\sum_{\log q^m < n}
\frac{1}{n^{5/4}}
\ll
(\log q^m)^{-1/4}.
\end{align*}
Letting $x= q^{m/4(k-1)\gamma}$ we obtain
\begin{align*}
&
\log F(\mathrm{Sym}_f^{\gamma}, \sigma)
-
\sum_{ p\in \mathbb{P}(q)\setminus \mathcal{P}_{\log q^m}}
\frac{c_{f,\gamma,p}(1)}{ p^{\sigma}} e^{- p/q^{m/4(k-1)\gamma}}
\\
=&
O_{\delta, k, \gamma}\left(
(\log q^m)^{-\delta}
+(\log q^m)^{-1/4}+(q^{m/4(k-1)\gamma})^{-\delta/2}
\right).
\end{align*}
Since it is easy to see that $c_{f,\gamma,p}(1)=\lambda_f( p^{\gamma})$ from
\eqref{euler} and \eqref{tsuika2}, we now
obtain the following lemma.
%
%
\begin{lem}\label{appSymL}
Suppose Assumptions~\ref{ass} and \ref{grh}.
Let $Q(\mu)$ be the smallest prime number
satisfying $2^{\mu}/\sqrt{Q(\mu)}<1$ 
and $f$ be a primitive form in $S_k(q^m)$, where $q > Q(\mu)$ is a prime.
For fixed $\gamma$ and
$\sigma=1/2 + \delta$ $(0< \delta \leq 1/2)$, 
we have
\begin{align}\label{F}
&
\log L_{\mathbb{P}(q)}(\mathrm{Sym}_f^{\gamma}, \sigma)
- 
\log L_{\mathcal{P}_{\log q^m}}(\mathrm{Sym}_f^{\gamma}, \sigma)
-
\mathcal{S}_{\gamma}
\nonumber\\
\ll &
(\log q^m)^{-\delta}
+(\log q^m)^{-1/4}+(q^{m/4(k-1)\gamma})^{-\delta/2},
\end{align}
where
\[
\mathcal{S}_{\gamma}=
\sum_{ p\in \mathbb{P}(q)\setminus \mathcal{P}_{\log q^m}}
\frac{\lambda_f( p^{\gamma})}{ p^{\sigma}} e^{- p/q^{m/(k-1)\gamma}}.
\]
\end{lem}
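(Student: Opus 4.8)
The plan is to assemble the estimates built up in the present section around the auxiliary function $F(\mathrm{Sym}_f^\gamma, s) = L_{\mathbb{P}(q)}(\mathrm{Sym}_f^\gamma, s)/L_{\mathcal{P}_{\log q^m}}(\mathrm{Sym}_f^\gamma, s)$, for which the left-hand side of \eqref{F} is precisely $\log F(\mathrm{Sym}_f^\gamma, \sigma) - \mathcal{S}_\gamma$. The first step is to record the uniform bound \eqref{F'/F}: for $1/2+\varepsilon\le\sigma\le 2$ one has $(F'/F)(\mathrm{Sym}_f^\gamma, s) \ll \log q^m + \log(1+|t|)$. This combines Duke's estimate \eqref{duke} for $(L'/L)(\mathrm{Sym}_f^\gamma, s)$ — which is where Assumptions~\ref{ass} and \ref{grh} enter, supplying holomorphy in $\sigma>1/2$ together with the convexity bound \eqref{ass-estimate} — with the elementary estimate \eqref{L'/L} for the finite Euler product $L_{\mathcal{P}_{\log q^m}^+}$, obtained by partial summation and the prime number theorem; here the hypothesis $q\ge Q(\mu)$ is used exactly to keep the local factor $1-\lambda_f(q^\gamma)q^{-s}$ bounded away from $0$.

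The second step is the Duke-type Mellin argument. Starting from $e^{-y}=\frac{1}{2\pi i}\int_{(2)} y^{-z}\Gamma(z)\,dz$ and the Dirichlet series \eqref{tsuika} for $F'/F$, one gets an exact identity expressing the smoothed sum $-\sum_{p>\log q^m}\log p\sum_{\ell\ge1} c_{f,\gamma,p}(\ell)\,p^{-\ell u} e^{-p^\ell/x}$ as a contour integral of $(F'/F)(\mathrm{Sym}_f^\gamma, u+z)\,x^z\Gamma(z)$. Shifting the contour to $\Re z = (1+\delta)/2 - u$ picks up the residue $(F'/F)(\mathrm{Sym}_f^\gamma, u)$ at $z=0$, while the shifted integral is $O(x^{(1+\delta)/2-u}\log q^m)$ by \eqref{F'/F}. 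Integrating over $u\in[\sigma,3/2]$ turns $F'/F$ into $\log F(\mathrm{Sym}_f^\gamma,\sigma)-\log F(\mathrm{Sym}_f^\gamma,3/2)$, giving \eqref{tsuika4}.

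Finally I would isolate the $\ell=1$ term — the only main term — and estimate all the rest. Using $c_{f,\gamma,p}(\ell)=O(1)$ from \eqref{tsuika2}, the $\ell\ge2$ double sum is $\ll(\log q^m)^{-\delta}$, the sum $\sum_{p>\log q^m} c_{f,\gamma,p}(1)\,p^{-3/2}e^{-p/x}$ is $\ll(\log q^m)^{-1/2}$, and $|\log F(\mathrm{Sym}_f^\gamma,3/2)|\ll(\log q^m)^{-1/4}$ by \eqref{tsuika0}. The choice $x=q^{m/4(k-1)\gamma}$ balances the contour-shift error $x^{1/2-\sigma+\delta/2}\log q^m/\log x$ against $\log q^m$ and produces the term $(q^{m/4(k-1)\gamma})^{-\delta/2}$. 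Identifying $c_{f,\gamma,p}(1)=\lambda_f(p^\gamma)$ via \eqref{euler} and \eqref{tsuika2} shows the surviving sum is $\mathcal{S}_\gamma$, which finishes the proof.

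The main obstacle is the bound \eqref{F'/F}: it is the one place where the deep input (Assumptions~\ref{ass} and \ref{grh}, hence Duke's argument and the convexity estimate) is genuinely needed, and it is also what forces the restriction $q\ge Q(\mu)$. Everything afterwards is Mellin bookkeeping and routine tail estimates, though one must track carefully the dependence of the error terms on $x$, $\delta$, $k$ and $\gamma$ so that the final balancing of parameters goes through.
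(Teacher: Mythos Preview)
Your proposal is correct and follows essentially the same route as the paper's own proof: the same auxiliary function $F$, the same logarithmic-derivative bound \eqref{F'/F} (via Duke's argument plus the elementary estimate \eqref{L'/L}, with $q\ge Q(\mu)$ used precisely to control the local factor at $q$), the same Mellin--contour-shift--integrate sequence leading to \eqref{tsuika4}, the same tail estimates for the $\ell\ge2$ terms and for $\log F$ at $3/2$, and the same choice $x=q^{m/4(k-1)\gamma}$.
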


%
%
\section{Proof of Lemma~\ref{keylemma} for $1 \geq \sigma > 1/2$.}

We already proved Lemma~\ref{keylemma} for $\sigma > 1$ in Section 4.
In this section, we prove Lemma~\ref{keylemma} for $1 \geq \sigma > 1/2$
by using \eqref{F} proved in the previous section,
under Assumptions~\ref{ass} and \ref{grh}.
We remind the relation 
\[
\int_{\mathbb{R}}
\mathcal{M}_{\sigma}(u)
\psi_x(u)
\frac{du}{\sqrt{2\pi}}
=
\widetilde{\mathcal{M}}_{\sigma}(x).
\]

Our aim is to prove that 
\begin{equation}\label{prime}
\bigg|
\sum_{f\in S_k(q^m)}^{\qquad\prime}
\psi_{\xi}(\log L_{\mathbb{P}(q)}(\mathrm{Sym}_f^{\mu}, \sigma)-\log L_{\mathbb{P}(q)}(\mathrm{Sym}_f^{\nu}, \sigma)) 
- 
\widetilde{\mathcal{M}}_{\sigma}(x)
\bigg|
\end{equation}
tends to $0$ as $q^m\to \infty$ for fixed
$q \geq Q(\mu)$,
when $1\geq \sigma > 1/2$.

%
%

First, using \eqref{ihara}, we can see the following inequality:
\begin{align}\label{basic}
&
\bigg|
\sum_{f\in S_k(q^m)}^{\qquad\prime}
\psi_x(\log L_{\mathbb{P}(q)}(\mathrm{Sym}_f^{\mu}, \mathrm{Sym}_f^{\nu}, \sigma)) 
-
\widetilde{\mathcal{M}}_{\sigma}(x)
\bigg|
\nonumber\\
\leq&
\bigg|
\sum_{f\in S_k(q^m)}^{\qquad\prime}
\big(
\psi_x(\log L_{\mathbb{P}(q)}(\mathrm{Sym}_f^{\mu}, \mathrm{Sym}_f^{\nu}, \sigma)) 
\nonumber\\&
\hspace{12mm}-
\psi_x(\log L_{\mathcal{P}_{\log q^m}}(\mathrm{Sym}_f^{\mu}, \mathrm{Sym}_f^{\nu}, \sigma))
\big)
\bigg|
\nonumber\\
&+
\bigg|
\sum_{f\in S_k(q^m)}^{\qquad\prime}
\psi_x(\log L_{\mathcal{P}_{\log q^m}}(\mathrm{Sym}_f^{\mu}, \mathrm{Sym}_f^{\nu}, \sigma))
-
\widetilde{\mathcal{M}}_{\sigma, \mathcal{P}_{\log q^m}}(x)
\bigg|
\nonumber\\
&+
\left|
\widetilde{\mathcal{M}}_{\sigma, \mathcal{P}_{\log q^m}}(x)
- 
\widetilde{\mathcal{M}}_{\sigma}(x)
\right|
\nonumber\\
\ll &
\sum_{f\in S_k(q^m)}^{\qquad\prime}
|x| \bigg(\big|\log L_{\mathbb{P}(q)}(\mathrm{Sym}_f^{\mu}, \sigma)- \log L_{\mathcal{P}_{\log q^m}}(\mathrm{Sym}_f^{\mu}, \sigma)-\mathcal{S}_{\mu}\big| +\big|\mathcal{S}_{\mu}\big|
\nonumber\\
&
+
\big|\log L_{\mathbb{P}(q)}(\mathrm{Sym}_f^{\nu}, \sigma)) - \log L_{\mathcal{P}_{\log q^m}}(\mathrm{Sym}_f^{\nu}, \sigma)
-\mathcal{S}_{\nu}\big|+\big|\mathcal{S}_{\nu}\big|\bigg)
\nonumber\\
&+
\bigg|
\sum_{f\in S_k(q^m)}^{\qquad\prime}
\psi_x(\log L_{\mathcal{P}_{\log q^m}}(\mathrm{Sym}_f^{\mu}, \mathrm{Sym}_f^{\nu}, \sigma))
-
\widetilde{\mathcal{M}}_{\sigma, \mathcal{P}_{\log q^m}}(x)
\bigg|
\nonumber\\
&+
\left|
\widetilde{\mathcal{M}}_{\sigma, \mathcal{P}_{\log q^m}}(x)
- 
\widetilde{\mathcal{M}}_{\sigma}(x)
\right|
\nonumber\\
=&
\mathcal{X}_{\log q^m}+\mathcal{Y}_{\log q^m}
+\mathcal{Z}_{\log q^m},
\end{align}
say.
From Proposition~\ref{tildeM}, for any $\varepsilon>0$, there exists
a number $N_0=N_0(\varepsilon)$ 
for which
\[
\left|
\widetilde{\mathcal{M}}_{\sigma, \mathcal{P}_{\log q^m}}(x)
- \widetilde{\mathcal{M}}_{\sigma}(x)
\right|<\varepsilon
\]
holds for any $q^m>N_0$, uniformly in $x\in\mathbb{R}$.
Therefore
\begin{align}\label{lim-Z}
\lim_{q^m\to\infty}\mathcal{Z}_{\log q^m}
=0.
\end{align}

On the estimate of $\mathcal{X}_{\log q^m}$, 
by using \eqref{P1} and \eqref{F},
we find that
\begin{align*}
&
\sum_{f\in S_k(q^m)}^{\qquad\prime}
|x| \bigg(\big|\log L_{\mathbb{P}(q)}(\mathrm{Sym}_f^{\mu}, \sigma)- \log L_{\mathcal{P}_{\log q^m}}(\mathrm{Sym}_f^{\mu}, \sigma)-\mathcal{S}_{\mu}\big|
\nonumber\\
&
+
\big|\log L_{\mathbb{P}(q)}(\mathrm{Sym}_f^{\nu}, \sigma)) - \log L_{\mathcal{P}_{\log q^m}}(\mathrm{Sym}_f^{\nu}, \sigma)-\mathcal{S}_{\nu}\big|
\bigg)
\to 0
\end{align*}
as $q^m$ tends to $\infty$, uniformly in $|x|\leq R$.
Next, by the Cauchy-Schwarz inequality we have
\begin{align*}
&
\sum_{f\in S_k(q^m)}^{\qquad\prime} \big|\mathcal{S}_{\gamma}\big|
\\
\leq &
\bigg(\sum_{f\in S_k(q^m)}^{\qquad\prime} 1^2\bigg)^{1/2}
\bigg(\sum_{f\in S_k(q^m)}^{\qquad\prime} 
\bigg(
\sum_{ p\in \mathbb{P}(q)\setminus \mathcal{P}_{\log q^m}}
\frac{\lambda_f(p^{\gamma})}{ p^{\sigma}}e^{- p/q^{m/4(k-1)\gamma}}
\bigg)^2\bigg)^{1/2}.
\end{align*}
Here, the first factor is $O(1)$ by \eqref{P1}, while the second factor is
\begin{align*}
\ll&
\bigg(\sum_{f\in S_k(q^m)}^{\qquad\prime} 
\sum_{ p\in \mathbb{P}(q)\setminus \mathcal{P}_{\log q^m}}
\frac{e^{- 2p/q^{m/4(k-1)\gamma}}}{p^{2\sigma}}
\\&
+
\sum_{f\in S_k(q^m)}^{\qquad\prime} 
\sum_{\substack{ p,  p' \in \mathbb{P}(q)\setminus \mathcal{P}_{\log q^m}\\  p> p'}}
\frac{\lambda_f(p^{\gamma}{p'}^{\gamma})}{( p p')^{\sigma}}e^{-( p+ p')/q^{m/4(k-1)\gamma}}
\bigg)^{1/2}
\\
\ll&
\bigg(\sum_{f\in S_k(q^m)}^{\qquad\prime} 
\frac{1}{(\log q^m)^{\delta}}
\\&
+
\sum_{\substack{ p,  p' \in \mathbb{P}(q)\setminus \mathcal{P}_{\log q^m}\\  p< p'}}
\frac{(pp')^{(k-1)\gamma/2}E(q^m)}{( p p')^{\sigma}}e^{-( p+ p')/q^{m/4(k-1)\gamma}}
\bigg)^{1/2}.
\end{align*}
Noting
\[
e^{-p/q^{m/4(k-1)\gamma}} 
\ll \bigg(\frac{q^{m/4(k-1)\gamma}}{p}\bigg)^{(k-1)\gamma/2+1/2}
\]
we obtain
\begin{align}\label{X_{log}}
\sum_{f\in S_k(q^m)}^{\qquad\prime} \big|\mathcal{S}_{\gamma}\big|
\ll&
\bigg(\sum_{f\in S_k(q^m)}^{\qquad\prime} 
\frac{1}{(\log q^m)^{\delta}}
\nonumber\\
&+
E(q^m)
\bigg(
\sum_{ p\in \mathbb{P}}
\frac{p^{(k-1)\gamma/2}}{ p^{\sigma}}
\bigg(
\frac{q^{m/4(k-1)\gamma}}{p}
\bigg)^{(k-1)\gamma/2+1/2}
\bigg)^2
\bigg)^{1/2}
\nonumber\\
\ll&
\bigg(
\frac{1}{(\log q^m)^{\delta}}
+
E(q^m)\bigg(\sum_{n=1}^{\infty}\frac{q^{(m/8+m/8(k-1)\gamma)}}{n^{\sigma+1/2}}\bigg)^2
\bigg)^{1/2}
\nonumber\\ 
\ll &
\bigg(
\frac{1}{(\log q^m)^{\delta}}
+
E(q^m)q^{m/2}
\bigg)^{1/2}
\nonumber\\
\ll &
(\log q^m)^{-\delta/2} 
\end{align}
by \eqref{E2}.
Hence we see that
\begin{align}\label{lim-X}
\lim_{q^m\to\infty} \mathcal{X}_{\log q^m}
=
0
\end{align}
uniformly in $|x|\leq R$.

The remaining part of this section is devoted to the estimate of 
$\mathcal{Y}_{\log q^m}$.
We begin with the Taylor expansion
\[
\psi_x(g_{\sigma,  p}(t_p))=\exp(ixg_{\sigma,  p}(t_p))
=1+\sum_{n=1}^{\infty}\frac{(ix)^n}{n!}g_{\sigma,  p}^n(t_p),
\]
where
\begin{align*}
g_{\sigma,  p}^n(t_p)
=&\left(-\log(1-t_p p^{-\sigma})\right)^n
\\
= &
\left(\sum_{j=1}^{\infty}\frac{1}{j}\left(\frac{t_p}{ p^{\sigma}}\right)^j\right)^n
\\
=&
\sum_{a=1}^{\infty} \bigg(\sum_{\substack{a=j_1+\ldots+j_n\\ j_{\ell} \geq 1}}\frac{1}{j_1j_2\cdots j_n}\bigg)
\left(\frac{t_p}{ p^{\sigma}}\right)^a.
\end{align*}
Hence
\begin{align*}
\psi_x(g_{\sigma,  p}(t_p))
= &
1+\sum_{n=1}^{\infty}\frac{(ix)^n}{n!}
\sum_{a=1}^{\infty} \bigg(\sum_{\substack{a=j_1+\ldots+j_n\\ j_{\ell} \geq 1}}\frac{1}{j_1j_2\cdots j_n}\bigg)
\left(\frac{t_p}{ p^{\sigma}}\right)^a\\
=&1+
\sum_{a=1}^{\infty} 
\sum_{n=1}^a\frac{(ix)^n}{n!}\bigg(\sum_{\substack{a=j_1+\ldots+j_n\\ j_{\ell} \geq 1}}\frac{1}{j_1j_2\cdots j_n}\bigg)
\left(\frac{t_p}{ p^{\sigma}}\right)^a,
\end{align*}
which we can write as
\begin{align}\label{G-exp}
\psi_x(g_{\sigma,  p}(t_p))=\sum_{a=0}^{\infty}G_a( p, x) t_p^a
\end{align}
with
\begin{align*}
G_a( p, x)
=& \begin{cases}
       1 & a=0,\\
       \displaystyle{\frac{1}{ p^{a\sigma}}\sum_{n=1}^a\frac{(ix)^n}{n!}\bigg(\sum_{\substack{a=j_1+\ldots+j_n\\ j_{\ell} \geq 1}}\frac{1}{j_1j_2\cdots j_n}\bigg)} 
         & a\geq 1.      
       \end{cases}
\end{align*}
Define
\begin{align*}
G_a(x)
=&
 \begin{cases}
       1 & a=0,\\
       \displaystyle\sum_{n=1}^a\frac{x^n}{n!}
       \left(\begin{matrix}a-1\\n-1\end{matrix}\right)
         & a\geq 1.      
       \end{cases}
\end{align*}
This symbol is the same as (63) 
in Ihara and the first author~\cite{i-1stAuthor}.
Using this symbol we obtain
\begin{equation}\label{G_a}
|G_a( p, x)|
\leq
\frac{1}{ p^{a\sigma}}G_a(|x|)
\end{equation}(see (65) in \cite{i-1stAuthor}).
We use (75), (78) and (79)
in Ihara and the first author~\cite{i-1stAuthor} below.

From \eqref{g_sigma} and \eqref{G-exp} we find that
\begin{align}\label{daiji}
&
\sum_{f\in S_k(q^m)}^{\qquad\prime}
\psi_x(\log L_{\mathcal{P}_{\log q^m}}(\mathrm{Sym}_f^{\mu}, \mathrm{Sym}_f^{\nu}, \sigma))
\nonumber\\
=&
\sum_{f\in S_k(q^m)}^{\qquad\prime}
\prod_{ p \in \mathcal{P}_{\log q^m}}
\psi_x (
g_{\sigma,  p}(\alpha_f^{\mu}( p))+g_{\sigma,  p}(\beta_f^{\mu}( p))
)
\nonumber\\
=&
\sum_{f\in S_k(q^m)}^{\qquad\prime}
\prod_{ p \in \mathcal{P}_{\log q^m}}
\Big(\sum_{a_p=0}^{\infty}G_{a_p}( p, x) 
\alpha_f^{\mu a_p}( p)
\Big)
\Big(\sum_{b_p=0}^{\infty}G_{b_p}( p, x) 
\beta_f^{\mu b_p}( p)
\Big)
\nonumber\\      
=&
\sum_{f\in S_k(q^m)}^{\qquad\prime}
\prod_{ p \in \mathcal{P}_{\log q^m}}
\Big(
\sum_{a_p=0}^{\infty} G_{a_p}^2( p, x) 
\nonumber\\
&
+ 
\sum_{\substack{0\leq a_p, b_p\\ a_p\neq b_p}} 
G_{a_p}( p, x) G_{b_p}( p, x) 
\alpha_f^{\mu a_p}( p) \beta_f^{\mu b_p}( p)
\Big)
\end{align}
and also, from Proposition~\ref{M_P} and \eqref{G-exp}, 
\begin{align}\label{daiji2}
&
\widetilde{\mathcal{M}}_{\sigma, \mathcal{P}_{\log q^m}}(x)
=
\int_{\mathbb{R}}\psi_x(u){\mathcal{M}}_{\sigma, \mathcal{P}_{\log q^m}}(u)\frac{du}{\sqrt{2\pi}}
\nonumber\\
=&
\int_{\mathcal{T}_{\mathcal{P}_{\log q^m}}}\hspace{-2em}
\psi_x(2\Re(g_{\sigma, \mathcal{P}_{\log q^m}}( t_{\mathcal{P}_{\log q^m}})))d^* t_{\mathcal{P}_{\log q^m}}
\nonumber\\
=&
\prod_{ p\in \mathcal{P}_{\log q^m}} 
\int_{\mathcal{T}_p}
\psi_x(2\Re(g_{\sigma,  p}(t_p)))d^*t_p
\nonumber\\
=
&
\prod_{ p\in \mathcal{P}_{\log q^m}} 
\int_{\mathcal{T}_p}
\psi_x(g_{\sigma,  p}(t_p))\psi_x(g_{\sigma,  p}(\overline{t_p}))
d^*t_p
\nonumber\\
=&
\prod_{ p\in \mathcal{P}_{\log q^m}} 
\int_{\mathcal{T}_p}
\Big(\sum_{a_p=0}^{\infty}G_{a_p}( p, x) t_ p^{a_p} \Big)
\Big(\sum_{b_p=0}^{\infty}G_{b_p}( p, x) t_ p^{-b_p}\Big)
d^*t_p
\nonumber\\
=
&
\prod_{ p\in \mathcal{P}_{\log q^m}} 
\Big(\sum_{a_p=0}^{\infty}G_{a_p}^2( p, x) \Big).
\end{align}
Write $\mathcal{P}_{\log q^m}=\{ p_1,\; p_2,\;\ldots,\; p_L\}$,
where $p_l$ means the $l$-th prime number.   (So $L=\pi(\log q^m)$.)
Substituting \eqref{daiji} and \eqref{daiji2} into the definition of
$\mathcal{Y}_{\log q^m}$, and noting \eqref{P1}, we obtain
\begin{align*}
\mathcal{Y}_{\mathcal{P}_{\log q^m}}
= &
\bigg|
E(q^m)\prod_{p\in\mathcal{P}_{\log q^m}}\sum_{a_p=0}^{\infty}G_{a_p}^2(p,x)
\\
&+
\sum_{f\in S_k(q^m)}^{\qquad\prime}
\sum_{\substack{(j_1,\;\ldots,\;j_{L})\neq (0,\ldots,0)\\ j_{\ell}\in \{0, 1\}}}
\prod_{\ell=1}^{L}
\bigg(
\sum_{a_{ p_{\ell}=0}}^{\infty} 
G_{a_{ p_{\ell}}}^2( p_{\ell}, x) \bigg)^{1-j_{\ell}}
\nonumber\\
&\times
\bigg(
\sum_{\substack{0\leq a_{ p_{\ell}}, b_{ p_{\ell}}\\ a_{ p_{\ell}}\neq b_{ p_{\ell}}}} 
G_{a_{ p_{\ell}}}( p_{\ell}, x) G_{b_{ p_{\ell}}}( p_{\ell}, x) 
\alpha_f^{\mu a_{ p_{\ell}}}(p_{\ell})\beta_f^{\mu b_{ p_{\ell}}}(p_{\ell})
\bigg)^{j_{\ell}}\bigg|.
\end{align*}

If we see that
\begin{align}\label{Y_1}
\mathcal{Y}'_{\mathcal{P}_{\log q^m}}
= &
\bigg|
\sum_{f\in S_k(q^m)}^{\qquad\prime}
\sum_{\substack{(j_1,\;\ldots,\;j_{L})\neq (0,\ldots,0)\\ j_{\ell}\in \{0, 1\}}}
\prod_{\ell=1}^{L}
\bigg(
\sum_{a_{ p_{\ell}=0}}^{\infty} 
G_{a_{ p_{\ell}}}^2( p_{\ell}, x) \bigg)^{1-j_{\ell}}
\nonumber\\
&\times
\bigg(
\sum_{\substack{0\leq a_{ p_{\ell}}, b_{ p_{\ell}}\\ a_{ p_{\ell}}\neq b_{ p_{\ell}}}} 
G_{a_{ p_{\ell}}}( p_{\ell}, x) G_{b_{ p_{\ell}}}( p_{\ell}, x) 
\alpha_f^{\mu a_{ p_{\ell}}}(p_{\ell})\beta_f^{\mu b_{ p_{\ell}}}(p_{\ell})
\bigg)^{j_{\ell}}\bigg|
\end{align}
and
\begin{equation}\label{Y''}
\mathcal{Y}''_{\mathcal{P}_{\log q^m}}
=
E(q^m)\prod_{p\in\mathcal{P}_{\log q^m}}\sum_{a_p=0}^{\infty}G_{a_p}^2(p,x)
\end{equation}
tend to $0$, then we obtain that $\mathcal{Y}_{\mathcal{P}_{\log q^m}}$ tends to $0$ 
as $q^m$ tends to $\infty$.

We first consider the second inner sum on the right-hand side of \eqref{Y_1}. 
Letting $a_p-b_p=r_p$ for the part of $a_p > b_p$ and
letting $b_p-a_p=r_p$ for the part of $b_p > a_p$, we obtain
\begin{align*}
&
\sum_{\substack{0\leq a_p, b_p\\ a_p\neq b_p}} 
G_{a_p}( p, x) G_{b_p}( p, x) 
\alpha_f^{\mu a_p}( p)\beta_f^{\mu b_p}( p)
\\
=&
\sum_{0 \leq a_p < b_p} 
+
\sum_{a_p > b_p \geq 0} 
\\
=&
\sum_{r_p \geq 1} 
\sum_{b_p \geq r_p} 
G_{b_p-r_p}( p, x) G_{b_p}( p, x) 
\alpha_f^{\mu (b_p-r_p)}( p)\beta_f^{\mu b_p}( p)
\\
&
+
\sum_{r_p \geq 1}
\sum_{a_p \geq r_p} 
G_{a_p}( p, x) G_{a_p-r_p}( p, x) 
\alpha_f^{\mu a_p}( p)\beta_f^{\mu (a_p-r_p)}( p)
\\
=&
\sum_{r_p \geq 1} 
\sum_{a_p \geq r_p}
G_{a_p}( p, x) G_{a_p-r_p}( p, x) 
\left(
\beta_f^{\mu r_p}( p)
+
\alpha_f^{\mu r_p}( p)
\right)
\\
=&
\sum_{r_p \geq 1} 
\sum_{a_p \geq r_p}
G_{a_p}( p, x) G_{a_p-r_p}( p, x) 
\left(
\lambda_f( p^{\mu r_p})
-
\lambda_f( p^{\mu r_p-2})
\right),
\end{align*}
where the last equation is deduced by 
\begin{align*}
\lambda_f(p^{\mu r_p})
=&
\sum_{h=0}^{\mu r_p}\alpha_f^{\mu r_p -h}(p)\beta_f^h(p) 
\\
=&
\alpha_f^{\mu r_p}+\alpha_f^{\mu r_p -2} +\alpha_f^{\mu r_p -4}+\ldots
+\beta_f^{\mu r_p -4}+\beta_f^{\mu r_p -2} +\beta_f^{\mu r_p }
\end{align*}
which is from \eqref{euler}.
Letting
\[
G_{ p, x}(r)
=\sum_{a_{ p} \geq r}
G_{a_{ p}}( p, x) G_{a_{ p}-r}( p, x),
\]
from \eqref{Y_1} we obtain
\begin{align}\label{daiji3}
\mathcal{Y}'_{\mathcal{P}_{\log q^m}}
=&
\bigg|
\sum_{f\in S_k(q^m)}^{\qquad\prime}
\sum_{\substack{(j_1,\;\ldots,\;j_{L})\neq (0,\ldots,0)\\ j_{\ell}\in \{0, 1\}}}
\prod_{\ell=1}^{L}
\bigg(
\sum_{a_{ p_{\ell}=0}}^{\infty} 
G_{a_{ p_{\ell}}}^2( p_{\ell}, x) \bigg)^{1-j_{\ell}}
\nonumber\\
&\times
\bigg(
\sum_{r_{ p_{\ell}} \geq 1} 
G_{a_{ p_{\ell}},x}(r_{ p_{\ell}})
\big(
\lambda_f( p_{\ell}^{\mu r_{ p_{\ell}}})
-
\lambda_f( p_{\ell}^{\mu r_{ p_{\ell}}-2})
\big)
\bigg)^{j_{\ell}}\bigg|
\nonumber\\
=&
\bigg|
\sum_{\substack{(j_1,\;\ldots,\;j_{L})\neq (0,\ldots,0)\\ j_{\ell}\in \{0, 1\}}}
\prod_{\ell=1}^{L}
\bigg(
\sum_{a_{ p_{\ell}=0}}^{\infty} 
G_{a_{ p_{\ell}}}^2( p_{\ell}, x) \bigg)^{1-j_{\ell}}
\nonumber\\
&\times
\sum_{f\in S_k(q^m)}^{\qquad\prime}
\prod_{\ell=1}^{L}
\bigg(
\sum_{r_{ p_{\ell}} \geq 1} 
G_{a_{ p_{\ell}},x}(r_{ p_{\ell}})
\big(
\lambda_f( p_{\ell}^{\mu r_{ p_{\ell}}})
-
\lambda_f( p_{\ell}^{\mu r_{ p_{\ell}}-2})
\big)
\bigg)^{j_{\ell}}
\bigg|.
\end{align}

\begin{rem}\label{nu}
Here we remark why we only consider the case $\mu-\nu=2$ in the present paper.
If we consider the case that $\nu$ has the same parity with $\mu$
but $\mu-\nu=2h > 2$, and discuss analogously as above, then the factor of the form
\[
\prod_{h=0}^{(\mu-\nu)/2-1}
\prod_{\ell=1}^{L}
\bigg(
\sum_{r_{ p_{\ell}} \geq 1} 
G_{a_{ p_{\ell}},x}(r_{ p_{\ell}})
\big(
\lambda_f( p_{\ell}^{(\mu-2h) r_{ p_{\ell}}})
-
\lambda_f( p_{\ell}^{(\mu-2h) r_{ p_{\ell}}-2})
\big)
\bigg)^{j_{\ell}}
\]
appears.
The summation of this factor over primitive forms cannot be included in the error term, 
because of \eqref{P}.
\end{rem}

Let us continue the argument.
From \eqref{daiji3} we obtain
\begin{align*}
\mathcal{Y}'_{\mathcal{P}_{\log q^m}}
\leq 
\sum_{\substack{(j_1,\;\dots,\;j_{L})\neq (0,\ldots,0)\\ j_{\ell}\in \{0,1\}}}
\prod_{\ell=1}^{L}
|G_{ p_{\ell}, x}(0)|^{1-j_{\ell}}
\Big|
\sum_{f\in S_k(q^m)}^{\qquad\prime}
\sum_{1 < n}\mathcal{G}_x(n)\lambda_f(n)
\Big|,
\end{align*}
where 
\[
\mathcal{G}_x(n)=\begin{cases}
        \displaystyle \prod_{\substack{1\leq\ell\leq L\\j_{\ell}=1}}
(-1)^{r''(p_{\ell})} G_{ p_{\ell},x}(r_{ p_{\ell}})
    &   \displaystyle  n= \prod_{\substack{1\leq\ell\leq L\\j_{\ell}=1}} p_{\ell}^{r'_{ p_{\ell}}}, 
\\
        0 & \mathrm{otherwise},
       \end{cases}
\]
$r'_{ p_{\ell}}= \mu r_{ p_{\ell}}$ or $\mu r_{ p_{\ell}}-2$, and
\[
r''(p_{\ell})=
\begin{cases}
0 & r'_{p_{\ell}}=\mu r_{p_{\ell}}\\ 
1 & r'_{p_{\ell}}=\mu r_{p_{\ell}}-2.
\end{cases}
\] 

We divide the summation of $\mathcal{G}_x(n)\lambda_f(n)$ above
into two parts according to the conditions $n \leq M$ and $n>M$, 
where $M$ is a suitable constant depending on $k$, $\log q^m$ 
and $ p_{\ell}$ $(1\leq \ell \leq L)$ defined below. 
We apply the formula \eqref{P} with $n \neq 1$ to the summation of $n \leq M$. 
And we use $\lambda_f(n)\ll n^{\eta}$ (where $\eta$ is an arbitrarily small positive 
number which will be specified later) by the Ramanujan-Petersson estimate
for the estimation of summation of $n >M$.
We obtain
\begin{align}\label{Y_2}
\mathcal{Y}'_{\mathcal{P}_{\log q^m}}
\leq &
\sum_{\substack{(j_1,\;\dots,\;j_{L})\neq (0,\ldots,0)\\ j_{\ell}\in \{0,1\}}}
\prod_{\ell=1}^{L}
|G_{ p_{\ell}, x}(0)|^{1-j_{\ell}}
\nonumber\\&
\times
\bigg(
E(q^m)
\sum_{1<n \leq M}|\mathcal{G}_x(n)|n^{(k-1)/2}
+
\sum_{n>M} |\mathcal{G}_x(n)|n^{\eta}
\bigg).
\end{align}
From (75), (78) and (79) in \cite{i-1stAuthor} and \eqref{G_a} in this paper, we see
that
\begin{align}\label{coeff1}
|G_{ p_{\ell},x}(0)|
\leq &
\sum_{a_{ p_{\ell}}=0}^{\infty}|G_{a_{ p_{\ell}}}( p_{\ell},x)|^2
\leq 
\sum_{a_{ p_{\ell}}=0}^{\infty}\frac{1}{ p_{\ell}^{2a_{ p_{\ell}}\sigma}}G_{a_{ p_{\ell}}}^2(|x|)
\nonumber\\
\leq &
\left(\sum_{a_{ p_{\ell}}=0}^{\infty}\frac{1}{ p_{\ell}^{a_{ p_{\ell}}\sigma}}G_{a_{ p_{\ell}}}(|x|)\right)^2
=\bigg(\exp\left(\frac{|x|}{ p_{\ell}^{\sigma}-1}\right)\bigg)^2
\end{align}
and
\begin{align}\label{coeff1.5}
|G_{ p_{\ell},x}(r_{ p_{\ell}})|
\leq&
\sum_{a_{ p_{\ell}} \geq r_{ p_{\ell}}}|G_{a_{ p_{\ell}}}( p_{\ell}, x)G_{a_{ p_{\ell}}-r_{ p_{\ell}}}( p_{\ell}, x)|
\nonumber\\
\leq &
\sum_{a_{ p_{\ell}} \geq r_{ p_{\ell}}}\frac{1}{ p_{\ell}^{a_{ p_{\ell}}\sigma} p_{\ell}^{(a_{ p_{\ell}}-r_{ p_{\ell}})\sigma}}
G_{a_{ p_{\ell}}}(|x|)G_{a_{ p_{\ell}}-r_{ p_{\ell}}}(|x|)
\nonumber\\
\leq &
\sum_{a'_{ p_{\ell}}=0}^{\infty}\frac{1}{ p_{\ell}^{(a'_{ p_{\ell}}+r_{ p_{\ell}})\sigma} p_{\ell}^{a'_{ p_{\ell}}\sigma}}
G_{a'_{ p_{\ell}}+r_{ p_{\ell}}}(|x|)G_{a'_{ p_{\ell}}}(|x|)
\nonumber\\
\leq &
\frac{1}{ p^{r_{ p_{\ell}}\sigma}}
\sum_{a'_{ p_{\ell}}=0}^{\infty}\frac{1}{ p_{\ell}^{a'_{ p_{\ell}}\sigma} p_{\ell}^{a'_{ p_{\ell}}\sigma}}
G_{a'_{ p_{\ell}}}(|x|)G_{a'_{ p_{\ell}}}(|x|)L_{r_{ p_{\ell}}}(|x|)
\nonumber\\
\leq &
\frac{L_{r_{ p_{\ell}}}(|x|)}{ p_{\ell}^{r_{ p_{\ell}}\sigma}}
\bigg(\exp\left(\frac{|x|}{ p_{\ell}^{\sigma}-1}\right)\bigg)^2
\nonumber\\
\leq &
\frac{1}{ p^{r_{ p_{\ell}}\sigma/2}}
\bigg(\exp\left(\frac{|x|}{ p_{\ell}^{\sigma}-1}\right)\bigg)^2
\bigg(\exp\left(\frac{|x|}{ p_{\ell}^{\sigma/2}-1}\right)\bigg),
\end{align}
where
\[
L_r(x)=\sum_{m=0}^r G_m(x)
\]
(the same as [74] in \cite{i-1stAuthor}).
Therefore, when
\[
n=
\prod_{\substack{1\leq\ell\leq L\\ j_{\ell}=1}}
p_{\ell}^{r'_{ p_{\ell}}}
\]
with $r'_{p_{\ell}}=\mu r_{p_{\ell}}$ or $\mu r_{p_{\ell}}-1$ for $r_{p_{\ell}}\geq 1$,
from \eqref{coeff1.5} we have
\begin{align}\label{cal-G}
\mathcal{G}_x(n)
\leq &
\prod_{\substack{1\leq\ell\leq L\\ j_{\ell}=1}}
|G_{ p_{\ell}, x}(r_{ p_{\ell}})|
\nonumber\\
\leq &
\prod_{\substack{1\leq\ell\leq L\\ j_{\ell}=1}}
\frac{1}{ p_{\ell}^{r_{ p_{\ell}}\sigma/2}}
\left(\exp\left(\frac{|x|}{ p_{\ell}^{\sigma}-1}\right)\right)^2
\bigg(\exp\bigg(\frac{|x|}{ p_{\ell}^{\sigma/2}-1}\bigg)\bigg)
\nonumber\\
\leq &
\frac{1}{n^{\sigma/2\mu}}
\prod_{\substack{1\leq\ell\leq L\\ j_{\ell}=1}}
\left(\exp\left(\frac{|x|}{ p_{\ell}^{\sigma}-1}\right)\right)^2
\bigg(\exp\bigg(\frac{|x|}{ p_{\ell}^{\sigma/2}-1}\bigg)\bigg).
\end{align}
From \eqref{Y_2}, \eqref{coeff1} and \eqref{cal-G},
we obtain
\begin{align*}
\mathcal{Y}'_{\mathcal{P}_{\log q^m}}
\ll&
\sum_{\substack{(j_1,\;\dots,\;j_{L})\neq  (0,\ldots,0)\\ j_{\mu}\in \{0,1\}}}
\prod_{\ell=1}^{L}
\left(\exp\left(\frac{|x|}{ p_{\ell}^{\sigma}-1}\right)\right)^2
\bigg(\exp\bigg(\frac{|x|}{ p_{\ell}^{\sigma/2}-1}\bigg)\bigg)^{j_{\ell}}
\\
&
\times
\bigg(
E(q^m)
\sum_{\substack{n \leq M \\  p \nmid n\;\mathrm{for}\;  p>\log q^m}} 
\frac{n^{(k-1)/2}}{n^{\sigma/2\mu}}
+
\sum_{\substack{n > M \\  p \nmid n\;\mathrm{for}\;  p>\log q^m}} 
\frac{n^{\eta}}{n^{\sigma/2\mu}}
\bigg).
\end{align*}
Here we choose $\eta=\sigma/4\mu$.   Then the second inner sum is
\begin{align*}
&
\sum_{\substack{n > M \\  p \nmid n\;\mathrm{for}\;  p>\log q^m}} 
\frac{1}{n^{\sigma/4\mu}}
<
\frac{1}{M^{\sigma/8\mu}}
\sum_{\substack{n > M \\  p \nmid n\;\mathrm{for}\;  p>\log q^m}} 
\frac{1}{n^{\sigma/8\mu}}
\\
< &
\frac{1}{M^{\sigma/8\mu}}
\prod_{l=1}^{L}\frac{1}{1-p_l^{-\sigma/8\mu}}
=
\frac{1}{M^{\sigma/8\mu}}
\prod_{l=1}^{L}\frac{p_l^{\sigma/8\mu}}{p_l^{\sigma/8\mu}- 1}
\\
\ll &
\frac{1}{M^{\sigma/8\mu}}
\prod_{l=1}^{L}\frac{p_l^{\sigma/8\mu}}{p_l^{\sigma/8\mu}- p_l^{\sigma/8\mu}/2}
\ll 
\frac{2^{L}}{M^{\sigma/8\mu}}.
\end{align*}
Hence we obtain
\begin{align*}
\mathcal{Y}'_{\mathcal{P}_{\log q^m}}
&\leq 
\sum_{\substack{(j_1,\;\dots,\;j_{L})\neq (0,\ldots,0)\\ j_{\ell}\in \{0, 1\}}}
\prod_{\ell=1}^{L}
\left(\exp\left(\frac{|x|}{ p_{\ell}^{\sigma}-1}\right)\right)^2
\bigg(\exp\bigg(\frac{|x|}{ p_{\ell}^{\sigma/2}-1}\bigg)\bigg)^{j_{\ell}}
\\
&\quad
\times
\bigg(
E(q^m)
M^{(k+1)/2-\sigma/2\mu}
+
\frac{2^{L}}{M^{\sigma/8\mu}}
\bigg)
\\
&\leq 
2^{L}
\prod_{\ell=1}^{L}
\left(\exp\left(\frac{|x|}{ p_{\ell}^{1/4}-1}\right)\right)^3
\\
&\quad
\times
\bigg(
E(q^m)M^{(k+1)/2-1/4\mu}
+
\frac{2^{L}}{M^{1/16\mu}}
\bigg),
\end{align*}
since $\sigma>1/2$.
For large $\ell$, we know
\begin{align}\label{boundby2}
\exp\bigg(\frac{|x|}{ p_{\ell}^{1/4}-1}\bigg)
\leq
\exp\bigg(\frac{R}{ p_{\ell}^{1/4}-1}\bigg)
<2
\end{align}
for $|x|\leq R$, hence
\begin{align*}
\prod_{\ell=1}^{L}
\left(\exp\left(\frac{|x|}{ p_{\ell}^{1/4}-1}\right)\right)^3
 \ll_{R} 
2^{3L}.
\end{align*}
Therefore we obtain
\begin{equation}\label{Y_*}
\mathcal{Y}'_{\mathcal{P}_{\log q^m}}
\leq 
2^{4L}
\bigg(
E(q^m)M^{c(k,\mu)}
+
\frac{2^{L}}{M^{1/16\mu}}
\bigg),
\end{equation}
where $c(k,\mu)=(k+1)/2-1/4\mu$.
Noting the fact that the number of the prime numbers less than 
$2^5=32$ is $11$, we choose
\[
M=
\left(
\frac{ p_1\cdots  p_{L}}{(2^5)^{L-11}}
\right)^{1/c(k,\mu)}.
\]
Since $c(k,\mu)\leq 15/2<8$, we see that
\begin{align*}
M=&
( p_1\cdots  p_{11})^{1/c(k,\mu)} 
\left(\frac{ p_{12}}{2^5}\cdot \frac{ p_{13}}{2^5}\cdot
\frac{ p_{L}}{2^5}\right)^{1/c(k,\mu)} 
\\
>&
( p_1\cdots  p_{11})^{2/15}
>
(200560490130)^{1/8}
>
25.
\end{align*}
For large $m$ or $q$, by the prime number theorem we have
\begin{align*}
p_1\cdots p_L
&=\exp(\log p_1+\cdots+\log p_L)\\
&=\exp(\log q^m+O((\log q^m)\exp(-c_1\sqrt{\log\log q^m}))\\
&\leq q^{m(1+c_2\exp(-c_1\sqrt{\log\log q^m}))}
\end{align*}
where $c_1$, $c_2$ are positive constants.
By using \eqref{E2}, we have
\begin{align}\label{Y1}
&
2^{4L}E(q^m)M^{c(k,\mu)}
\leq 
2^{4L}E(q^m)
\cdot\frac{ p_1\cdots  p_{L}}{2^{5L-55}}
\nonumber\\
\ll &
\frac{ p_1\cdots  p_{L}}{2^{L}}\cdot E(q^m)
\ll 
\frac{q^{m(1+c_2\exp(-c_1\sqrt{\log\log q^m}))}}{2^{L}}
\cdot
q^{-m}.
\end{align}
Again by the prime number theorem, we see that
\[
2^L=2^{\log q^m(1+o(1))/\log\log q^m}=(q^m)^{\log 2(1+o(1))/\log\log q^m},
\]
so we find that the right-hand side of \eqref{Y1} is
\[
\ll (q^m)^{c_2\exp(-c_1\sqrt{\log\log q^m})-\log 2(1+o(1))/\log\log q^m},
\]
whose exponent is negative for large $q^m$.
Therefore this tends to 0 as $q^m$ tends to $\infty$.

Next, we have
\begin{align*}
\frac{2^{4L}2^{L}}{M^{1/16\mu}}
=&
2^{5L}
\left(
\frac{ p_1\cdots  p_{L}}{2^{5L-55}}
\right)^{-1/16\mu c(k,\mu)} 
\nonumber\\
\ll &
\frac{(2^{L})^{5+5/16\mu c(k,\mu)}}
{( p_1\cdots  p_{L})^{1/16\mu c(k,\mu)}} 
\nonumber\\
\ll &
\left(\frac{(2^{80\mu c(k,\mu)+5})^{L}}
{ p_1\cdots  p_{L}}\right)^{1/16\mu c(k,\mu)},
\end{align*}
so, putting $d(k,\mu)=2^{80\mu c(k,\mu)+5}$, the above is
\begin{align}\label{Y2}
=&
\left(
\frac{d(k,\mu)}{p_1}\cdots\frac{d(k,\mu)}{ p_{\pi(d(k,\mu))}}
\frac{(d(k,\mu))^{L-\pi(d(k,\mu))}}{ p_{\pi(d(k,\mu))+1}\cdots  p_{L}}
\right)^{1/16\mu c(k,\mu)} 
\nonumber\\
&\ll_k
\left(
\frac{d(k,\mu)}{ p_{\pi(d(k,\mu))+1}}
\right)^{(L-\pi(d(k,\mu))/16\mu c(k,\mu)}.
\end{align}
Since the quantity in the parentheses is smaller than 1, we find that
this also tends to 0 as $q^m$ tends to $\infty$.
Therefore from \eqref{Y_*} we conclude that
$\mathcal{Y}'_{\mathcal{P}_{\log q^m}}$ tends to 0 as $q^m$ tends to $\infty$.

The idea of evaluating $\mathcal{Y}''_{\mathcal{P}_{\log q^m}}$,
defined by \eqref{Y''}, is
essentially similar, but much simpler.  
First, using \eqref{coeff1}, we have
\begin{align*}
\mathcal{Y}''_{\mathcal{P}_{\log q^m}}
=&
E(q^m)\prod_{p\in\mathcal{P}_{\log q^m}}\sum_{a_p=0}^{\infty}G_{a_p}^2(p,x)
\leq
E(q^m)\prod_{p\in\mathcal{P}_{\log q^m}}|G_{p,x}(0)|
\nonumber\\
\leq &
E(q^m)\prod_{p\in\mathcal{P}_{\log q^m}}\bigg(\exp\bigg(\frac{|x|}{p^{\sigma}-1}\bigg)\bigg)^2.
\end{align*}
Then, by an argument similar to \eqref{boundby2}, the above is
\begin{align*}
\ll_R
E(q^m)2^{L}
\ll
E(q^m)e^{\log q^m/\log\log q^m} 
=E(q^m)(q^m)^{(1+o(1))/\log\log q^m},
\end{align*}
which tends to 0
as $q^m$ tends to $\infty$.
Therefore we now arrive at the assertion
\begin{align}\label{lim-Y}
\lim_{q^m\to\infty}\mathcal{Y}_{\mathcal{P}_{\log q^m}} 
=0.
\end{align}
Finally we see that Lemma~\ref{keylemma} is established,
by substituting \eqref{lim-Z}, \eqref{lim-X} and \eqref{lim-Y} into \eqref{basic}.

\section{Completion of the proof of Theorem~\ref{main}.}

The only remaining task now is to deduce the general statement of our
Theorem~\ref{main} from Lemma~\ref{keylemma}.
This can be done by using the general principle on the weak convergence of probability
measures (as indicated in Remark 3.2 of \cite{i-m3}), but here we follow a more
self-contained treatment given in Ihara and the first author~\cite{i-1stAuthor}.
In this section, we just explain the outline of the proof of Theorem~\ref{main},
because the argument is the same as that in \cite{i-1stAuthor}.

For any $\varepsilon >0$, the aim of this section
is to prove that 
\begin{equation}\label{A1}
\sum_{f\in S(q^m)}^{\qquad\prime} \Psi\circ
\log L_{\mathbb{P}(q)}(\mathrm{Sym}_f^{\mu}, \mathrm{Sym}_f^{\nu}, \sigma)
\end{equation}
tends to
\[
\int_{\mathbb{R}}
\mathcal{M}_{\sigma}(u)
\Psi(u)\frac{du}{\sqrt{2\pi}}
\]
as $q^m$ tends to $\infty$.

We define the set $\Lambda$ of the function $\Phi$ on $\mathbb{C}$ by
\[
\Lambda
=
\{\Phi \in L^1\cap L^{\infty} \mid \Phi^{\vee} \in L^1 \cap L^{\infty}, 
(\Phi^{\vee})^{\wedge}=\Phi\},
\]
where $\Phi^{\wedge}$ means the Fourier transform of $\Phi$
and
$\Phi^{\vee}$ means the Fourier inverse transform of $\Phi$.
We know
\[
\Phi(u)
=\int_{\mathbb{R}}\Phi^{\wedge}(x)\psi_{-u}(x)\frac{dx}{\sqrt{2\pi}}
=\int_{\mathbb{R}}\Phi^{\wedge}(x)\psi_{-x}(u)\frac{dx}{\sqrt{2\pi}}.
\]
Since we
also know $\widetilde{\mathcal{M}}_{\sigma} \in \Lambda$ from
Proposition~\ref{tildeM} and Proposition~\ref{M}, 
we have $\mathcal{M}_{\sigma} \in \Lambda$.
Therefore we have
\begin{align*}
\int_{\mathbb{R}}
\mathcal{M}_{\sigma}(u)
\Phi(u)\frac{du}{\sqrt{2\pi}}
=&
\int_{\mathbb{R}}
\overline{
\mathcal{M}_{\sigma}}^{\wedge}(x)\Phi^{\wedge}(x)
\frac{dx}{\sqrt{2\pi}}
\\
=&
\int_{\mathbb{R}}
\widetilde{{\mathcal{M}}_{\sigma}}(-x)
\Phi^{\wedge}(x)
\frac{dx}{\sqrt{2\pi}}.
\end{align*}

%
%

In the case of $\Psi=\Phi\in\Lambda$,
we can see
\begin{align*}
&
\bigg|
\sum_{f\in S(q^m)}^{\qquad\prime}
\Phi \circ \log L_{\mathbb{P}(q)}(\mathrm{Sym}_f^{\mu}, \mathrm{Sym}_f^{\nu}, \sigma)
-
\int_{\mathbb{R}}
\mathcal{M}_{\sigma}(u)
\Phi(u)\frac{du}{\sqrt{2\pi}}
\bigg| 
\\
= &
\bigg|
\sum_{f\in S_k(q^m)}^{\qquad\prime}
\int_{\mathbb{R}}
(\Phi^{\wedge}(x)
\psi_{-x}(
\log L_{\mathbb{P}(q)}(\mathrm{Sym}_f^{a}, \mathrm{Sym}_f^{\nu}, \sigma)
))\frac{dx}{\sqrt{2\pi}}
\\&
-
\int_{\mathbb{R}}
\widetilde{\mathcal{M}_{\sigma}}(-x)
\Phi^{\wedge}(x)\frac{dx}{\sqrt{2\pi}}
\bigg| 
\\
= &
\bigg|
\sum_{f\in S_k(q^m)}^{\qquad\prime}
\int_{\mathbb{R}}
(\Phi^{\wedge}(-x)
\psi_x(
\log L_{\mathbb{P}(q)}(\mathrm{Sym}_f^{\mu}, \mathrm{Sym}_f^{\nu}, \sigma)
))\frac{dx}{\sqrt{2\pi}}
\\&
-
\int_{\mathbb{R}}
\widetilde{\mathcal{M}_{\sigma}}(x)
\Phi^{\wedge}(-x)\frac{dx}{\sqrt{2\pi}}
\bigg| 
\\
\leq &
\int_{\mathbb{R}}
|\Phi^{\wedge}(-x)| \\
&\times
\bigg|
\sum_{f\in S_k(q^m)}^{\qquad\prime}
(
\psi_x(
\log L_{\mathbb{P}(q)}(\mathrm{Sym}_f^{\mu}, \mathrm{Sym}_f^{\nu}, \sigma)
))
-
\widetilde{\mathcal{M}}_{\sigma}(x)
\bigg|
\frac{dx}{\sqrt{2\pi}}
.
\end{align*}
We divide the above integral into two parts, $|x|\leq R$ and $|x|>R$
by sufficiently large $R$.
Since $\psi_x$ and $\widetilde{\mathcal{M}}_{\sigma}(x)$ are bounded (see
\eqref{tildeTrivial}) and $\Phi^{\wedge}\in L^1$, 
the integral on $|x|>R$ is small for sufficiently large $R$.
The other integral on $|x|\leq R$ is then also small by Lemma~\ref{keylemma},
for large $q$ or $m$.    Therefore the desired assertion holds for
$\Psi\in\Lambda$.

In the case that $\Psi$ is a compactly supported function on $C^{\infty}$,
this is a element in the Schwartz space. Therefore $\Psi\in \Lambda$.

In the case that $\Psi$ is a compactly supported continuous function,
this is approximated by compactly supported functions in $C^{\infty}$.
Therefore, in this case Theorem~\ref{main} is established. Especially,
in the case that $\Psi$ is a characteristic function on a compact subset, 
$\Psi$ is approximated by compactly supported continuous function.
Therefore, in this case the proof is complete.

Finally, we consider
the case that $\Psi$ is a bounded continuous function.
For any $R>0$, 
there exists a compactly supported continuous function $\Psi_R$
such that
$\Psi_R(x)=\Psi(x)$ for $|x|\leq R$ and 
$|\Psi_R(x)|\leq |\Psi(x)|$ for $|x|>R$.
We already know that 
\[
\lim_{\substack{q\to \infty\\ \mathrm{or}\; m\to\infty}} 
\sum_{f\in S(q^m)}^{\qquad\prime} \Psi_R \circ 
\log L_{\mathbb{P}(q)}(\mathrm{Sym}_f^{\mu}, \mathrm{Sym}_f^{\nu}, \sigma)
=
\int_{\mathbb{R}}
\mathcal{M}_{\sigma}(u)
\Psi_R(u)\frac{du}{\sqrt{2\pi}},
\]
where 
the above equation is proved for $q\geq Q(\mu)$ when $1 \geq \sigma >1/2$
in the case of $m \to \infty$.
For the right-hand side of this equation, we have
\begin{align*}
&
\int_{\mathbb{R}}
\mathcal{M}_{\sigma}(u)
\Psi_R(u)\frac{du}{\sqrt{2\pi}}
\\
=&
\int_{|x|> R}
\mathcal{M}_{\sigma}(u)
\Psi_R(u)\frac{du}{\sqrt{2\pi}}
+
\int_{|x| \leq R}
\mathcal{M}_{\sigma}(u)
\Psi_R(u)\frac{du}{\sqrt{2\pi}}
\\
=&
\int_{|x|> R}
\mathcal{M}_{\sigma}(u)
\Psi_R(u)\frac{du}{\sqrt{2\pi}}
+
\int_{|x| \leq R}
\mathcal{M}_{\sigma}(u)
\Psi(u)\frac{du}{\sqrt{2\pi}}
\\
=&
\int_{|x|> R}
\mathcal{M}_{\sigma}(u)
(\Psi_R(u)-\Psi(u))\frac{du}{\sqrt{2\pi}}
+
\int_{\mathbb{R}}
\mathcal{M}_{\sigma}(u)
\Psi(u)\frac{du}{\sqrt{2\pi}}.
\end{align*}
As for the former integral, we remind that
${\mathcal{M}}_{\sigma}$ is non-negative to find
\[
\int_{|x|> R}
\mathcal{M}_{\sigma}(u)
(\Psi_R(u)-\Psi(u))\frac{du}{\sqrt{2\pi}}
\ll
\int_{|x|> R}
\mathcal{M}_{\sigma}(u)
\frac{du}{\sqrt{2\pi}}
\]
which tends to $0$ as $R$ tends to $\infty$,
since we know
\[
\int_{\mathbb{R}}
\mathcal{M}_{\sigma}(u)
\frac{du}{\sqrt{2\pi}}
= 1
\]
by
Proposition~\ref{M}.
Hence we have
\begin{align}\label{finalstage}
&
\lim_{R\to\infty}
\lim_{\substack{q\to\infty\\\mathrm{or}\;m\to\infty}} 
\sum_{f\in S(q^m)}^{\qquad\prime} \Psi_R \circ 
\log L_{\mathbb{P}(q)}(\mathrm{Sym}_f^{\mu}, \mathrm{Sym}_f^{\nu}, \sigma)
\nonumber\\
=&
\lim_{R\to\infty}
\int_{\mathbb{R}}
\mathcal{M}_{\sigma}(u)
\Psi_R(u)\frac{du}{\sqrt{2\pi}}
=
\int_{\mathbb{R}}
\mathcal{M}_{\sigma}(u)
\Psi(u)\frac{du}{\sqrt{2\pi}}.
\end{align}
Finally, by using the same argument as in p.675 in Ihara and the first 
author~\cite{i-1stAuthor}, from \eqref{finalstage} we obtain
\[
\lim_{\substack{q\to\infty\\\mathrm{or}\; m\to\infty}}
\sum_{f\in S(q^m)}^{\qquad\prime} \Psi \circ \log L_{\mathbb{P}(q)}(\mathrm{Sym}_f^{\mu}, \mathrm{Sym}_f^{\nu}, \sigma) 
=
\int_{\mathbb{R}}
\mathcal{M}_{\sigma}(u)
\Psi(u)\frac{du}{\sqrt{2\pi}},
\]
which is the conclusion of our Theorem~\ref{main}.

Kohji Matsumoto:\\
Graduate School of Mathematics,
Nagoya University, Furocho, Chikusa-ku, Nagoya 464-8602, Japan.\\
kohjimat@math.nagoya-u.ac.jp
\bigskip

\noindent
Yumiko Umegaki:\\
Department of Mathematical and Physical Sciences,
Nara Women's University,
Kitauoya Nishimachi, Nara 630-8506, Japan.\\
ichihara@cc.nara-wu.ac.jp
\end{document}